\newtheorem{theorem}{Theorem}
\newtheorem{lemma}{Lemma}
\newtheorem{proposition}{Proposition}
\newtheorem{remark}{Remark}
\newtheorem{assumption}{Assumption}
\newenvironment{proof}{{\noindent \bf Proof.}\quad}{\hfill $\square$}
\begin{document}
\begin{frontmatter}

\title{Distributed robust optimization for multi-agent systems with guaranteed finite-time convergence \thanksref{footnoteinfo}}
\thanks[footnoteinfo]{This paper was supported in part by the National Nature Science Foundation of China under Grant 61825301. Corresponding author Jun Fu.}

\author[China]{Xunhao Wu}\ead{neuwxh2102043@163.com},    
\author[China]{Jun Fu}\ead{junfu@mail.neu.edu.cn}              
\address[China]{State Key Laboratory of Synthetical Automation for Process Industries, Northeastern University, Shenyang 110819, China}                                                  
\begin{keyword}                           
Distributed robust convex optimization; Bounded uncertainty; Uniformly strongly connected network; Finite-time convergence.              
\end{keyword}                             

\begin{abstract}                          
  A novel distributed algorithm is proposed for finite-time converging to a feasible consensus solution satisfying global optimality to a certain accuracy of the distributed robust convex optimization problem (DRCO) subject to bounded uncertainty under a uniformly strongly connected network. Firstly, a distributed lower bounding procedure is developed, which is based on an outer iterative approximation of the DRCO through the discretization of the compact uncertainty set into a finite number of points. Secondly, a distributed upper bounding procedure is proposed, which is based on iteratively approximating the DRCO by restricting the constraints right-hand side with a proper positive parameter and enforcing the compact uncertainty set at finitely many points. The lower and upper bounds of the global optimal objective for the DRCO are obtained from these two procedures. Thirdly, two distributed termination methods are proposed to make all agents stop updating simultaneously by exploring whether the gap between the upper and the lower bounds reaches the certain accuracy. Fourthly, it is proved that all the agents finite-time converges to a feasible consensus solution that satisfies global optimality within a certain accuracy. Finally, a numerical case study is included to illustrate the effectiveness of the distributed algorithm.
\end{abstract}

\end{frontmatter}

\section{Introduction}
Multi-agent systems are network systems consisting of multiple decision-making agents, each possessing computational, communicative, learning, perceptual, and executive capabilities \cite{ferber1999multi}. Such systems have been used in a wide variety of fields, such as wireless networks \cite{kotary2020distributed,ghosal2020distributed}, power systems \cite{yin2022distributed,ufa2022review}, and robotics \cite{zhou2023racer,9851519}. To minimize the global cost by designing some suitable distributed controllers for the agents, distributed optimization for multi-agent systems has been extensively studied, leading to significant advancements in both theoretical and computational aspects \cite{yang2019survey,nedic2018distributed,zheng2022review}.
Constrained distributed optimization is one of the important categories since there may be various constraints, such as local constraints, global inequality and equality constraints in practical applications \cite{yang2019survey}.
\par
On the constrained distributed optimization, there are a considerable number of algorithms have been proposed, e.g., \cite{5404774,9335004,chen2020distributed,chen2021fixed,8262781,bastianello2022novel,xie2017distributed,xie2018distributed}. However, to the best of our knowledge, most of the existing constrained distributed optimization algorithms can only be applied to bi-directional (or undirected) and weight-balanced communication networks, except for literature \cite{xie2017distributed,xie2018distributed} which can be used to time-varying unbalanced directed graphs under the assumption of uniformly strong connectivity. Furthermore, these algorithms were designed for the case where the local data of all agents are completely accurate. However, these data of real-world optimization problems tend to be uncertain as a result of measurement/estimation errors and implementation errors \cite{ben2009robust}. Hence, the main focus of this article is to solve a distributed robust convex optimization problem (DRCO) with bounded uncertainty under the weakest assumption of network communication: uniformly strong connectivity \cite{burger2013polyhedral}.
\par
Recently, some distributed algorithms for dealing with the DRCO were developed in \cite{yang2008distributed,wang2016distributed,6461383,lee2015asynchronous,8022966,falsone2020scenario,you2018distributed,carlone2014distributed,chamanbaz2017randomized,chamanbaz2017,burger2012distributed,burger2013polyhedral,yang2014distributed}, which can be categorized into four groups according to the treatment of uncertainty. Firstly, inspired by the robust counterpart approach in \cite{ben2009robust}, some algorithms were proposed in \cite{yang2008distributed,wang2016distributed}, which make all the agents asymptotically converge to a feasible optimal solution of the DRCO by transforming the DRCO into a robust counterpart problem and then doing parallel computation with a constrained distributed optimization algorithm. However, these algorithms are confined to special constraint structures. Secondly, in \cite{6461383,lee2015asynchronous}, some random projection algorithms were designed that almost surely converge to a feasible optimal solution of the DRCO, yet the local feasibility of the solutions of all the agents cannot be guaranteed. Thirdly, some scenario-based algorithms were developed in \cite{8022966,falsone2020scenario,you2018distributed,carlone2014distributed,chamanbaz2017randomized,chamanbaz2017} by sampling a large number of scenarios from the uncertainty set to approximate the DRCO, which asymptotically converge to a probabilistically feasible approximate optimal solution. However, these algorithms can not converge to the feasible optimal solution of the DRCO. Fourthly, some most relevant algorithms to our article were presented in \cite{burger2013polyhedral,burger2012distributed,yang2014distributed,xunhao}. These algorithms are based on iteratively approximating the DRCO by populating the cutting-planes/cutting-surfaces into the existing finite sets of constraints. The algorithms given in \cite{burger2013polyhedral,burger2012distributed,yang2014distributed} asymptotically converge to a feasible optimal solution, while the algorithm in \cite{xunhao} enables all agents to finite-time converge to feasible and approximately optimal solutions. With the exception of \cite{xunhao}, to our best knowledge, none of the existing algorithms can guarantee the finite-time convergence and local feasibility of the solutions for all agents. However, the solutions in \cite{xunhao} only satisfy the zero-order optimality conditions and cannot provide specific accuracy assurance of global optimality. Therefore, the motivation of this article is to {\it propose a novel distributed algorithm for locating a feasible consensus solution satisfying global optimality to a certain accuracy of the DRCO under a uniformly strongly connected network within a finite number of iterations}.
\par
In this paper, the DRCO is studied for uniformly strongly connected multi-agent systems with the strictly convex global objective function. To solve this problem, based on the right-hand restriction approach \cite{Mitsos}, a distributed robust convex optimization algorithm is proposed, which has three parts. The first part is the distributed lower bounding procedure, which is based on iteratively approximating the DRCO by enforcing the compact uncertain sets at finitely many points. The second part is the distributed upper bounding procedure, which is developed by successively reducing the restriction parameters of the right-hand constraints and tightening the discretization of the compact uncertain sets. Both procedures above guarantee that each agent converges to the optimal solution of the DRCO. Moreover, at each iteration, the sum of the local objectives of all agents in the two procedures constitutes the lower and upper bounds of the global optimal value of the DRCO, respectively. The third part is an adaptation of the finite-time consensus algorithm proposed in \cite{xie2017stop}, which make all agents terminate simultaneously when the gap between the lower and the upper bounds reaches the certain accuracy. The main contribution of this paper is threefold:
\begin{enumerate}
	\item A distributed robust convex optimization algorithm is proposed to locate a feasible consensus solution of the DRCO satisfying global optimality to a certain accuracy under the assumption of a uniformly strongly connected network. 
	\item Two distributed termination methods are proposed to ensure finite-time convergence of the distributed robust convex optimization algorithm, and the performance of these two methods is compared.
	\item It is mathematically proven that the distributed robust convex optimization algorithm terminates within a finite number of iterations.
\end{enumerate} 
\par
The remaining sections of the paper are organized as follows. In Section 2, the problem formulation and some fundamental assumptions are given. In Section 3, two approximate problems of the DRCO are presented, and the distributed lower bounding procedure and distributed upper bounding procedure are designed. In Section 4, the distributed robust convex optimization algorithm for solving the DRCO is described, and the proof of finite-time convergence of the algorithm is also presented. In Section 5, a numerical case study is conducted to validate the proposed algorithm, and comparisons are provided between the proposed algorithm and some related algorithms. Finally, conclusions and an outlook on future work are drawn in Section 6.

\section{Problem Statement}
We consider a multi-agent system consisting of a set of agents $\mathcal{V}=\left\{1,... ,m\right\}$, in which each agent stores its local constraint, local cost function, identifier, and other private information. In a distributed optimization task, all the agents locate a feasible consensus solution for minimizing the global objective function based on agent communication and local computation. However, in real-world multi-agent systems, there exist perturbation errors in the parameters of each agent due to erroneous inputs, such as in the estimation and implementation. In order to guarantee the safety and stability of the system, we consider a distributed robust convex optimization problem with bounded uncertainty in the local objective functions and constraints of the following form to find a robust optimal consensus solution for the multi-agent system.
\begin{flalign} \label{DRO}
	\hspace{-11mm}
	\begin{aligned}
		&\mathop{\min}\limits_{x} &     & F(x)=\sum^m_{i=1} f_i(x,\delta_i)\\
		&\ \rm{s.t.}&     & x\in X= \mathop{\bigcap}\limits_{i=1}^m \mathop{\bigcap}\limits_{\gamma_i\in \Gamma_i} X_i(\gamma_i),
	\end{aligned}
\end{flalign}
where $x\in\mathbb{R}^n$ is a common decision vector of agents. $\delta_i\in \Delta_i$ and $\gamma_i\in \Gamma_i$ represent the uncertain parameters of local objective function and constraint set for agent $i$, respectively. $\Delta_i$ and $\Gamma_i$ are non-empty and compact sets. For each $i=1,...,m$, $f_i(\cdot):\mathbb{R}^n\times\Delta_i\rightarrow\mathbb{R}$ is the local objective function of agent $i$, and $X_i={\bigcap}_{\gamma_i\in \Gamma_i} X_i(\gamma_i)\subseteq\mathbb{R}^n$ is its constraint set. Suppose that $f_i$ is a convex function on $x$, and the set $X_i$ is convex and compact for all $i\in\mathcal{V}$. 
\par
Problem (\ref{DRO}) is the general form of a distributed robust convex optimization problem, where the robust optimal point $x^*\in X$ minimizes the global objective function $F$ considering the worst-case uncertainty $\overline\delta_i=\arg{\max}_{\delta_i\in\Delta_i}f_i$ for all $i\in\mathcal{V}$. 
To simplify the problem (\ref{DRO}), we adopt the epigraphic reformulation technique to transform (\ref{DRO}) into a standard distributed robust convex optimization problem. This technique ensures that the uncertainty is confined to the constraints, while the local objective functions remain unaffected \cite{ben2009robust}. The following is the standard form of a distributed robust convex optimization problem (\ref{DRCO}).
\begin{flalign} \label{DRCO}
	\tag{DRCO}
	\hspace{-1mm}
	\begin{aligned}
		&\mathop{\min}\limits_{x} &     & F(x)=\sum^m_{i=1}f_i(x)\\
		&\ \rm{s.t.}                            &     & g_i(x,y_i)\le0,\ \forall y_i\in Y_i,\quad i=1,2,...m,
	\end{aligned}
\end{flalign}
where $x\in\mathbb{R}^n$ is a common decision vector of agents, and $y_i \in Y_i$ is an uncertain vector of agent $i$. $Y_i\subseteq\mathbb{R}^{n_y}$ is a non-empty and compact set. For each $i=1,...,m$, $f_i:\mathbb{R}^n\rightarrow \mathbb{R}$ is the local objective function, and $g_i:\mathbb{R}^n\times Y_i\rightarrow \mathbb{R}$ is the local constraint function, where $g_i$ is a semi-infinite constraint consisting of finite-dimensional decision variables and an infinite number of inequality constraints. For all agents $i$, assume that $f_i$ and $g_i$ are convex functions concerning $x$ and that $g_i$ is continuous with respect to $y_i$. Let the feasible region of agent $i$ be $X_i=\left\{x|g_i(x,y_i)\le 0\ \forall y_i\in Y_i\right\}$.
\begin{assumption}[Solvability and Uniqueness]\label{Assumption 1}	
	The global feasible region of the (\ref{DRCO}) $X=\bigcap^m_{i=1} X_i$ is not empty, and
	the global objective function $F(x)$ is strictly convex on $x\in \mathbb{R}^n$, i.e., for any two point $u\neq v\in \mathbb{R}^n$, any $0<\theta<1$, there is
	\begin{flalign}
		F(\theta u+(1-\theta) v)<\theta F(u)+(1-\theta)F(v).
	\end{flalign} 
\end{assumption}
\par
Due to Assumption {\ref{Assumption 1}}, the (\ref{DRCO}) is solvable and has a unique optimal consensus solution $x^*$, i.e. for any $x\in X\verb|\| \left\{x^*\right\}$, it follows that $F^*=F(x^*)< F(x)$. Note that for the case where the global objective function is not strictly convex but convex, a tie-break rule can be used to ensure the uniqueness of the solution. For further interpretation of the tie-break rule, the reader is referred to \cite{burger2013polyhedral,Calafiore2006}.
\par
The communication among agents can be characterized in graph theory \cite{yang2019survey,west2001introduction}. A graph $\mathcal{G}=(\mathcal{V},\mathcal{E})$ can be used to represent the information sharing relationships among agents, where $\mathcal{V}=\left\{1,...,m\right\}$ denotes the vertex set, and $\mathcal{E}=\mathcal{V}\times\mathcal{V}$ is the edge set. A directed edge $(i,j)\in \mathcal{E}$ represents that agent $j$ can directly obtain information from agent $i$. $(i,i)$ indicates the self-loop of agent $i$. For agent $i$, the set of its in-neighbors is $N_i^{in}=\left\{j|(j,i)\in \mathcal{E}\right\}$, and the set of its out-neighbors is $N_i^{out}=\left\{j|(i,j)\in \mathcal{E}\right\}$. If for graph $\mathcal{G}$, $(j, i)\in\mathcal{E}$ if and only if $(i,j)\in\mathcal{E}$, then $\mathcal{G}$ is an undirected graph; otherwise, it is a directed graph. Define the weight matrix $A=\left\{a_{ij}\right\}\in \mathbb{R}^{m\times m}$, which satisfies that $a_{ij}>0$ if $j\in N_i^{in}$ and $a_{ij}=0$, otherwise. If $\sum^m_{i=1} a_{ij} =\sum^m_{i=1} a_{ji}$ for any $j\in \mathcal{V}$, then the graph $\mathcal{G}$ is weight-balanced; otherwise, $\mathcal{G}$ is weight-imbalanced. A time-invariant network is one in which the edge set remains unchanged over time slots, while a time-varying network experiences changes in the edge set due to unexpected loss of communication links. A time-invariant communication graph $\mathcal{G}$ is said to be strongly connected if and only if every agent in the graph is reachable from all other agents. For the time-variant network, the following assumption is commonly made \cite{yang2019survey,6930814}.
\begin{assumption}[Uniformly Strong Connectivity]\label{Assumption 2}
	The graph sequence $\left\{\mathcal G(t)\right\}$ is uniformly strongly connected, i.e. for all $t\ge 0$, there exists an interger $T>0$ that makes $G(t:t+T)$ strongly connected. 
	$$ \mathcal G(t:t+T)=([m],\mathcal G(t)\cup...\cup \mathcal G(t+T-1)).$$
\end{assumption}
\par
The assumption of uniformly strong connectivity is seen as the weakest assumption in network communication \cite{burger2013polyhedral}. The main focus of this article is to propose a distributed optimization algorithm that converges to an optimal consensus solution of the (\ref{DRCO}) in a finite number of iterations under the uniformly strongly connected assumption in the communication network while ensuring the feasibility of the solution at each agent for local constraints.
\section{Approximation Problems}
Since the constraints of all agents are semi-infinite constraints with finite-dimensional decision variables and an infinite number of inequality constraints, solving the (\ref{DRCO}) is NP-hard \cite{ben2009robust}. This section introduces the distributed lower bounding problem and designs a distributed lower bounding procedure based on the approach of successively tighter discretization of the compact sets $Y_i$. Then, this section presents the distributed upper bounding problem and illustrates that this problem is neither a relaxation nor a restriction of the (\ref{DRCO}). Moreover, a distributed upper bounding procedure is developed by successively reducing the restriction parameters of the right-hand constraints and tightening the discretization of the compact sets $Y_i$. 
	\subsection{Distributed Lower Bounding Procedure}
		\label{sec:Distributed Lower Bounding Procedure}
	A distributed lower bounding problem is introduced by discretizing the compact sets $Y_i$ into finite sets $\widetilde Y^k_i$.
	\begin{flalign} 
		\tag{$\mathrm{DLBD^k}$}\label{dlbd}
		\begin{aligned}
			&\mathop{\min}\limits_{x} &  & F(x)=\sum^m_{i=1}f_i(x)\\
			&\ \rm{s.t.}                             &  & g_i(x,y_i)\le0,\ \forall y_i\in \widetilde Y^k_i,\quad i=1,2,...m.
		\end{aligned}
	\end{flalign}
	Let $\widetilde X_i^k=\left\{x\in\mathbb{R}^n|g_i(x,y_i)\le0,\ \forall y_i\in \widetilde Y_i^k\right\}$ be the feasible set of the (\ref{dlbd}) for agent $i\in\mathcal{V}$. The global feasible set is $\widetilde X^k={\bigcap}_{i=1}^m\  \widetilde X^k_i$. For any finite set $\widetilde Y^{k+1}_i$, there is $\widetilde X_i^k\supset X_i$. Therefore, the (\ref{dlbd}) is a relaxation of the (\ref{DRCO}) and $\widetilde X^k\supset X$. Under Assumption 1, for any finite sets $\widetilde Y^{k+1}_i$ of all agents, the (\ref{dlbd}) is a solvable constrained distributed convex optimization problem, where the global optimal solution satisfies uniqueness. 
	\par
	Motivated by the strategy proposed in \cite{1976Infinitely,Mitsos,burger2013polyhedral}, we develop the first algorithmic primitive by successively tightening the discretization of the compact sets $Y_i$ of all agents, \emph {distributed lower bounding procedure}. 
	\par
    For any $k\ge0$, during the $(k+1)$-th iteration, each agent initially acquires the optimal point $x_i^{k+1}$ for (\ref{dlbd}) within finite time slots by agent communication and local computation. 
    Each agent then assigns the value of $x^{k+1}_i$ to $\widetilde x^{k+1}_i$ for the stopping criterion detection in Section \ref{sec:Finite-time Convergence}. Subsequently, each agent updates the finite set $\widetilde Y_i^{k+1}$ through the distributed lower bounding (DLBD) Oracle.
	\par
	\begin{itemize}
		\item[]
		\hspace{1em}
		{\bf DLBD Oracle} $\widetilde Y_i^{k+1}=LORC({x}^{k+1}_i,Y_i)$: verify the feasibility of a given point ${x}^{k+1}_i$ for agent $i$ by solving a lower level problem (LLP) to global optimality. 
		\begin{equation}\label{LLP}
			\tag{LLP}
			g^{\max}_i({x}^{k+1}_i)=\mathop{\max}\limits_{y_i\in Y_i} g_i({x}^{k+1}_i,y_i)
		\end{equation}
		\item[]
		\hspace{1em}
		{\bf If} $(\romannumeral 1)$ $g^{\max}_i({x}^{k+1}_i)>0$, i.e. ${x}^{k+1}_i\notin X_i$ then it populates the point $\hat{y}_i=\arg \max_{y_i\in Y_i} g_i({x}^{k+1}_i,y_i)$ into the finite set: $\widetilde Y_i^{k+1}=\widetilde Y_i^{k}\cup\left\{\hat{y}_i\right\}$, separating ${x}^{k+1}_i$ and $X_i$, {\bf otherwise} $(\romannumeral 2)$ it indicates that ${x}^{k+1}_i\in X_i$ and remains $\widetilde Y_i^{k+1}=\widetilde Y_i^{k}$. {\hfill $\square$}
	\end{itemize}
	\subsection{Distributed Upper Bounding Procedure}
	\label{sec:Distributed Upper Bounding Procedure}
	A distributed upper bounding problem is constructed by discretizing the compact sets $Y_i$ into finite sets $\overline Y_i^k$ and restricting the rights-hand constraints with proper positive parameters $\varepsilon^k_i$.
	\begin{flalign} \label{dubd}
		\tag{$\mathrm{DUBD^k}$}
		\begin{aligned}
			&\mathop{\min}\limits_{x} &  & F(x)=\sum^m_{i=1}f_i(x)\\
			&\ \rm{s.t.}                             &  & g_i(x,y_i)\le-\varepsilon_i^k,\ \forall y_i\in \overline Y_i^k,\quad  i=1,2,...m.
		\end{aligned}
	\end{flalign}
	Set $\overline X_i^k=\left\{x\in\mathbb{R}^n|g_i(x,y_i)\le-\varepsilon_i^k,\ \forall y_i\in \overline Y_i^k\right\}$ as the feasible set of the (\ref{dubd}) for agent $i\in\mathcal{V}$. The global feasible set is $\overline X^k={\bigcap}_{i=1}^m\  \overline X^k_i$.
	To elucidate the relations between the set $\overline X^k_i$ and the feasible set $X_i$ of the agent $i$ in the (\ref{DRCO}), we present the subsequent illustrations.
	\par
	\emph{Example 1}:
	Consider a distributed system where the constraint of the agent $i\in\mathcal{V}$ is a semi-infinite constraint as shown below: 
	\begin{equation}\label{example}
		\begin{aligned}
			& h_i(x,y_i)=[x(1)^2-2 x(1)]\times e^{-{x(1)}^2+{y_i}^2-2 x(1) y_i},\\
			&	g_i(x,y_i)=x(2)+{h_i(x,y_i)}\le 0, \ \ \forall y_i\in Y_i,
		\end{aligned}
	\end{equation}
	where $x=[x(1),x(2)]^\top\in \mathcal{F}_i=[0,2]\times[0,1]$, $Y_i=[0,2]$. Since $g_i(x,y_i)$ is a concave function on the uncertain parameter $y_i$, we can obtain that the semi-infinite constraint (\ref{example}) results in the feasible set of agents $i$: $X_i=\left\{x\in \mathcal{F}_i|x(2)+[x(1)^2-2\times x(1)]\times e^{-2{x(1)}^2}\le 0\right\}$.
	The corresponding inequality constraint constructed from the semi-infinite constraint is:
	\begin{equation}\label{inequality}
		g_i(x,y_i)=x(2)+{h_i(x,y_i)}\le -{\varepsilon^k_i}, \ \ \forall y_i\in \overline Y_i^k,
	\end{equation}
	where ${\varepsilon_i^k}\ge 0$, $\overline Y_i^k\subset Y_i$, and $\overline X_i^k$ is the feasible set of (\ref{inequality}).
	\par
	Figure \ref{neither} illustrates the relations between $X_i$ and $\overline X_i^k$ for different combinations of $\overline Y_i^k$ and $\varepsilon_i^k$.
	\begin{figure*}[t]
		\centering
		\begin{tabular}{cc}
			\begin{minipage}[t]{3in}
				\includegraphics[width=3in]{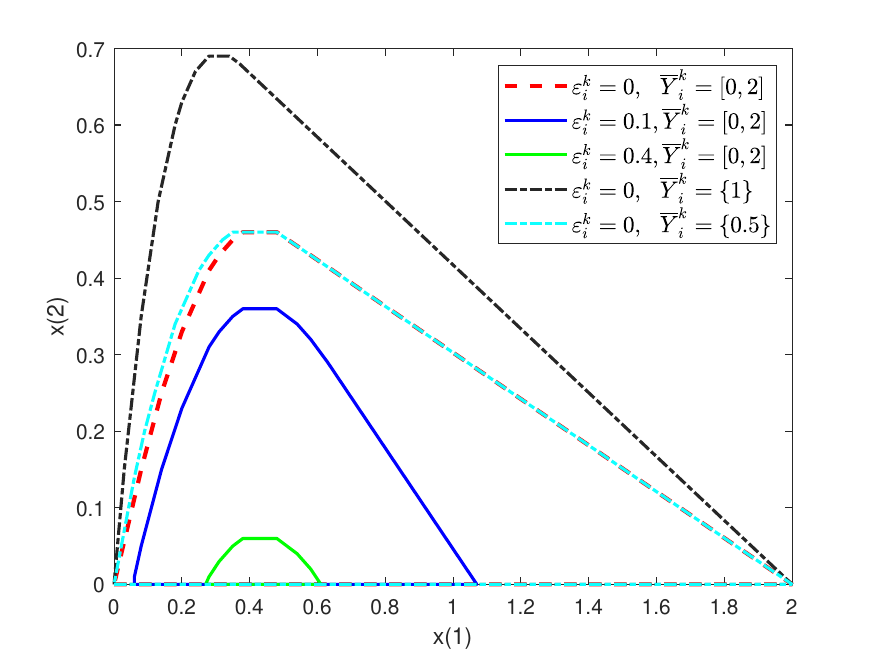}
			\end{minipage}
			\qquad
			\begin{minipage}[t]{3in}
				\includegraphics[width=3in]{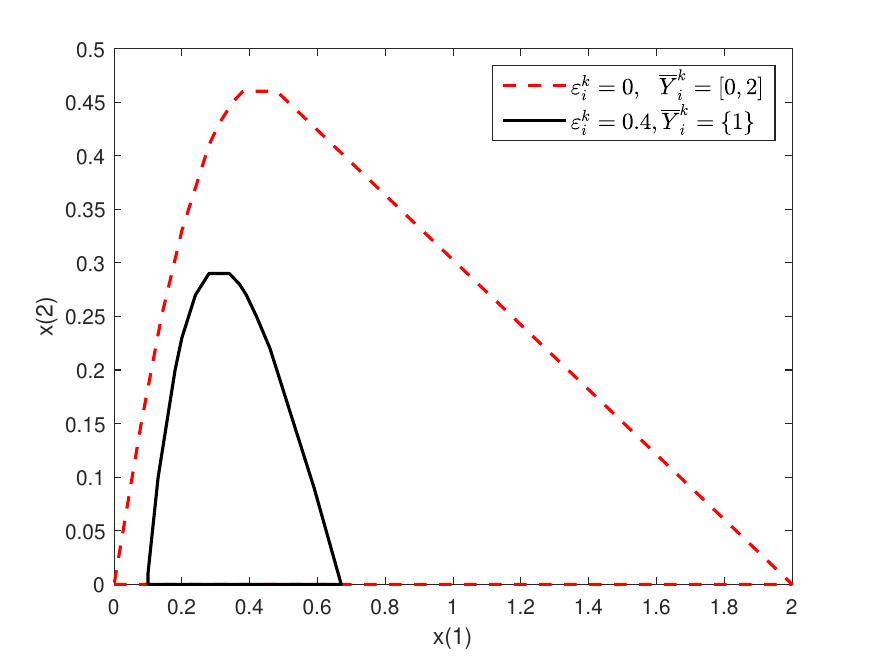}
			\end{minipage} 
			\\
			\begin{minipage}[t]{3in}
				\includegraphics[width=3in]{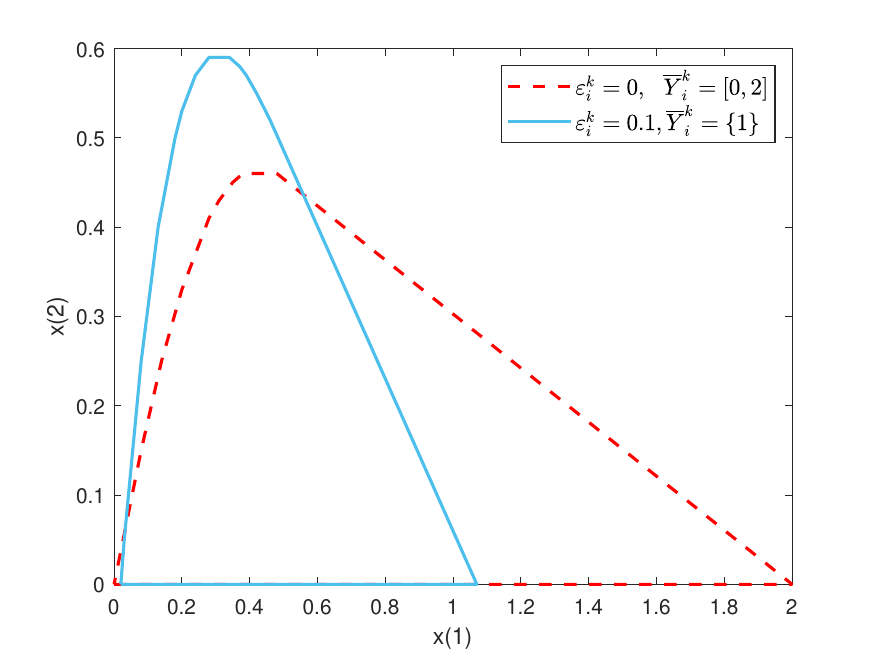}
			\end{minipage}
			\qquad
			\begin{minipage}[t]{3in}
				\includegraphics[width=3in]{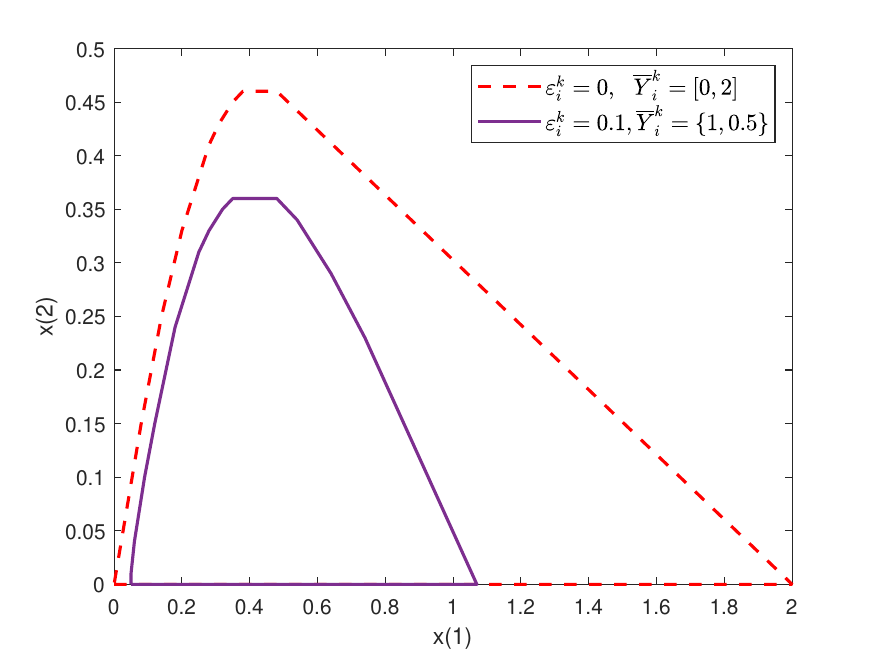}
			\end{minipage}
		\end{tabular}
		\caption{Graphic illustration of distributed upper bounding problem for $\mathcal{F}_i=[0,2]\times[0,1]$, $Y_i=[0,1]$ and $g(x,y_i)=x(2)+[x(1)^2-2\times x(1)]\times e^{-{x(1)}^2+{y_i}^2-2 x(1) y_i}$. The feasible region of the semi-infinite constraint is enclosed by the red dashed line. 
		The feasible regions of the inequality constraints, which are composed of different $\overline{Y}^k_i$ and $\varepsilon^k_i$, are enclosed by solid or dotted-dashed lines.}
		\label{neither}
	\end{figure*}
	\begin{enumerate}[a)] 
		\item For $\overline Y_i^k=Y_i$ and $\varepsilon_i^k>0$, the (\ref{dubd}) is a restriction of the (\ref{DRCO}): the feasible set of the agent $i$ satisfies $\overline X_i^k\subset X_i$; 
		\item For the finite set $\overline Y_i^k\subset Y_i^k$ and $\varepsilon_i^k=0$, the (\ref{dubd}) is a relaxation of the (\ref{DRCO}): the feasible sets of the agent $i$ satisfy $\overline X_i^k\supset X_i$;
		\item For $\varepsilon_i^k=0.4$ and $\overline Y_i^k=\left\{1\right\}$, the (\ref{dubd}) is a restriction of the (\ref{DRCO});
		\item For a smaller value $\varepsilon_i^k=0.1$ of the restriction parameter and the same finite set $\overline Y_i^k=\left\{1\right\}$, the (\ref{dubd}) is neither a relaxation nor a restriction of the (\ref{DRCO}): whether the optimal point of the (\ref{dubd}) is feasible for (\ref{DRCO}) relies on the global objective function $F$; 
		\item For $\varepsilon_i^k=0.1$ and $\overline Y_i^k=\left\{1,0.5\right\}$, the (\ref{dubd}) is again a restriction of the (\ref{DRCO}). This case is a tighter restriction compared to the case c).
	\end{enumerate}
	\par
	Based on the above example, it can be concluded that for any finite sets $\overline Y_i^k\subset Y_i^k$ and positive restriction parameters $\varepsilon_i^k$ of all agents, the (\ref{dubd}) cannot be regarded as a relaxation or a restriction of the (\ref{DRCO}). However, by gradually populating some points in the finite set $\overline Y_i^k$ and proportionally reducing the restriction parameter $\varepsilon_i^k$ for all agents, the feasible region of (\ref{dubd}) can gradually approach that of (\ref{DRCO}), which can make the optimal solution of (\ref{dubd}) converge to that of (\ref{DRCO}). Following this idea and the centralized right-hand restriction strategy \cite{Mitsos}, we propose the second algorithmic primitive: \emph {distributed upper bounding procedure}.
	\par
	For any $k\ge 0$, during the $(k+1)$-th iteration, each agent firstly obtains the optimal point $z_i^{k+1}$ of the (\ref{dubd}) in a finite number of time slots by exchanging local information with neighboring agents and performing local calculations. Then, each agent executes the distributed upper bouding (DUBD) Oracle via internal computation. Here, the vector $\overline x_i^{k+1}$ is chosen for the stopping criterion detection introduced in Section \ref{sec:Finite-time Convergence}.
	\begin{itemize}
		\item[]
		\hspace{1em}
		{\bf DUBD Oracle} $[\overline x_i^{k+1},\overline Y_i^{k+1},\varepsilon^{k+1}_i]=UORC$\\$(z_i^{k+1}, Y_i)$: queried at a given point $z_i^{k+1}$ for the compact set $Y_i$ by solving a lower level problem (LLP) to global optimality. 
		\begin{equation}
			\tag{LLP}
			g^{\max}_i(z_i^{k+1})=\mathop{\max}\limits_{y_i\in Y_i} g_i(z_i^{k+1},y_i)
		\end{equation}
		\item[]
		\hspace{1em}
		{\bf If} $(\romannumeral 1)$ $g^{\max}_i(z_i^{k+1})>0$, i.e. $z_i^{k+1}\notin X_i$ then it populates the point $\hat{y}_i=\arg \max_{y_i\in Y_i} g_i(z_i^{k+1},y_i)$ into the finite set: $\overline Y_i^{k+1}=\overline Y_i^k\cup\left\{\hat{y}_i\right\}$, separating $z_i^{k+1}$ and $X_i$. Assign a value to $\overline x^{k+1}_i$ that satisfies $f_i(\overline x^{k+1}_i)\rightarrow+\infty$, and let $\varepsilon_i^{k+1}=\varepsilon_i^k$, {\bf otherwise} $(\romannumeral 2)$ it reduces the restriction parameter proportionally, i.e. $\varepsilon_i^{k+1}\leftarrow\varepsilon_i^k/r$, where $r>1$ is a reduction parameter, and let $\overline Y_i^{k+1}=\overline Y_i^{k}$. Additionally, we assign the value of vector $z_i^{k+1}$ to $\overline x^{k+1}_i$. 
		{\hfill $\square$}
	\end{itemize}
	\par
	There may be an unsolvable case of the (\ref{dubd}) in the distributed upper bouding procedure, i.e. $\overline X^k=\emptyset$. We make the following assumption to exclude this case.
	\begin{assumption}[Interior Point]\label{Assumption 3}	
		The feasibile set of the (\ref{DRCO}) $X=\bigcap_{i=1}^m X_i$ contains at least one interior point $x_0$, i.e. 
		$$	g_i(x_0,y_i)<0,\ \forall y_i\in Y_i, \quad i=1,...,m.$$	
	\end{assumption}
	\begin{lemma}
		Under Assumption \ref{Assumption 3}, for any agent $i\in\mathcal{V}$, there exists an initial value $\varepsilon^{0}_i>0$ for the restriction parameter $\varepsilon_i^k$ such that the (\ref{dubd}) is solvable in any iteration $k+1>0$ of the distributed upper bounding procedure.
	\end{lemma}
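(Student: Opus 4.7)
The plan is to exhibit an explicit feasible point for (DUBD$^k$) at every iteration, using the interior point guaranteed by Assumption~\ref{Assumption 3}, and then invoke solvability via strict convexity of $F$ and the standing compactness/continuity hypotheses. The key observation is that $\varepsilon_i^k$ is non-increasing along the procedure (the DUBD Oracle either keeps $\varepsilon_i^{k+1}=\varepsilon_i^k$ or sets $\varepsilon_i^{k+1}=\varepsilon_i^k/r$ with $r>1$), and the set $\overline{Y}_i^{k}$ is by construction a subset of $Y_i$ (every point added is an argmax over $Y_i$). So a single, iteration-independent interior point will witness feasibility for all $k$, provided $\varepsilon_i^0$ is small enough.

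Concretely, I would proceed as follows. First, pick an interior point $x_0$ from Assumption~\ref{Assumption 3}, so that $g_i(x_0,y_i)<0$ for every $y_i\in Y_i$ and every $i\in\mathcal V$. Second, use continuity of $g_i(x_0,\cdot)$ on the compact set $Y_i$ to define
\begin{equation*}
\eta_i \;=\; -\max_{y_i\in Y_i} g_i(x_0,y_i) \;>\; 0,
\end{equation*}
the maximum being attained by Weierstrass. Third, set $\varepsilon_i^0\in(0,\eta_i]$; this is the initial value promised by the lemma. Fourth, argue by induction on $k$ that $\varepsilon_i^k\le\varepsilon_i^0$: the base case is trivial, and the inductive step follows from $\varepsilon_i^{k+1}\in\{\varepsilon_i^k,\varepsilon_i^k/r\}$. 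Fifth, for any $k$ and any $y_i\in\overline{Y}_i^k\subseteq Y_i$ one has
\begin{equation*}
g_i(x_0,y_i) \;\le\; -\eta_i \;\le\; -\varepsilon_i^0 \;\le\; -\varepsilon_i^k,
\end{equation*}
so $x_0\in\overline{X}_i^k$ for every $i$, hence $x_0\in\overline{X}^k=\bigcap_{i=1}^m\overline{X}_i^k$, so the feasible set of (DUBD$^k$) is non-empty.

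Finally, to upgrade non-emptiness of the feasible set to solvability (attainment of the minimum), I would combine strict convexity of $F$ from Assumption~\ref{Assumption 1} with the fact that each constraint $g_i(\cdot,y_i)\le-\varepsilon_i^k$ is a closed convex sublevel set of a convex function, so $\overline{X}^k$ is closed and convex; boundedness follows from the standing compactness of $X_i$ (or equivalently the a priori box on $x$ implicit in the problem formulation, which is preserved when constraints are tightened rather than relaxed relative to those defining $X_i$). A continuous convex function on a nonempty compact convex set attains its minimum, and strict convexity makes the minimizer unique.

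The only delicate point is step two: one must be sure that $\eta_i$ is strictly positive \emph{uniformly} in $y_i$, which is exactly what compactness of $Y_i$ plus continuity of $g_i(x_0,\cdot)$ buys. Everything else is bookkeeping. If the manuscript does not already give a blanket a priori bound on $x$, the boundedness argument in the last paragraph might require an extra line invoking the coercivity/level-boundedness that is implicit from the compactness of each $X_i$ together with the fact that $\overline{X}^k$ is squeezed between the finitely-indexed outer approximation and the restricted inner approximation; this bookkeeping is the main (minor) obstacle.
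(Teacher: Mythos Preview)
Your proposal is correct and follows essentially the same route as the paper's proof: exhibit the interior point $x_0$ from Assumption~\ref{Assumption 3}, choose $\varepsilon_i^{0}$ small enough that $g_i(x_0,y_i)\le-\varepsilon_i^{0}$ uniformly over $Y_i$, and then use the monotonicity $\varepsilon_i^{k}\le\varepsilon_i^{0}$ together with $\overline{Y}_i^{k}\subseteq Y_i$ to conclude that $x_0$ lies in $\overline{X}^{k}$ for every $k$. You are in fact slightly more careful than the paper, since you explicitly invoke Weierstrass to secure $\eta_i>0$ and separate non-emptiness of $\overline{X}^{k}$ from attainment of the minimum.
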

	\begin{proof}
		Since the feasible set of the ({\ref{DRCO}}) exists at least an interior point $x_0$ that satisfies
		\begin{equation} \nonumber
			x_0\in \bigcap^m_{i=1} \left\{x\in \mathbb{R}^n|g_i(x,y_i)<0,\forall y_i\in Y_i\right\}.
		\end{equation}
		Hence, there are a set of $\varepsilon^{0}_i$ that satisfies
		\begin{equation}\nonumber
			g_i(x_0,y_i)+\varepsilon^{0}_i\le0,\ \forall y_i\in Y_i, \quad i=1,...,m.
		\end{equation}
		We set $\hat{X}=\left\{x\in\mathbb{R}^n|g_i({x},y_i)\le-\varepsilon^{0}_i,\ \forall y_i\in Y_i,\ \forall i\in\mathcal{V}\right\}$. The set $\overline{X}^k$ is the global feasible set of the (\ref{dubd}), i.e. $$\overline{X}^{k}=\left\{x\in\mathbb{R}^n|g_i({x},y_i)\le-\varepsilon^{k}_i,\ \forall y_i\in \overline Y_i^k,\ \forall i\in\mathcal{V}\right\}.$$
		As $\varepsilon^{0}_i\ge\varepsilon_i^k$ and $\overline Y_i^k\subset Y_i$ hold for any agent $i$, we have $\hat X \subseteq \overline{X}^k$ and $\overline{X}^k\neq\emptyset$. Therefore, the (\ref{dubd}) is solvable in any iteration of the distributed upper bounding procedure.
	\end{proof} 
    \par
    Therefore, under Assumptions \ref{Assumption 1} and \ref{Assumption 3}, take proper values of $\varepsilon_i^0$ for all agents, the (\ref{dubd}) is a solvable constrained distributed convex optimization problem in any iteration $k+1>0$ of the distributed upper bounding procedure, where the global optimal solution meets uniqueness.
	\par
	Overall, this section presents two approximation problems for the (\ref{DRCO}) and proposes a distributed lower bounding procedure and a distributed upper bounding procedure. The convergence of these procedures is given in Section \ref{sec:Algorithm Design and Convergence Analysis}.
	\section{Algorithm Design and Convergence Analysis}
	\label{sec:Algorithm Design and Convergence Analysis}
	In this section, based on the aforementioned procedures, a distributed robust convex optimization algorithm to locate a feasible consensus solution satisfying global optimality to a certain accuracy is described, and its finite-time convergence is established.
	\subsection{Distributed Robust Convex Optimization Algorithm}
	\label{{sec:Distributed Robust Convex Optimization Algorithm}}
	The algorithm for solving the (\ref{DRCO}) is as follows:
\begin{algorithm}[h]
	\caption{Distributed robust convex optimization algorithm}
	\label{alg:1}
	\vspace{1mm}\small
	{\textbf {Input:}}  for each agent $i\in\mathcal{V}$: initial restriction parameter $\varepsilon_i^{0}>0$; two finite or empty subsets of $Y_i$, namely $\widetilde Y_i^{0}$ and $\overline Y_i^{0}$; reduction parameter $r>1$; termination parameter $\epsilon^f$; iteration counter $k=0$.
	\vspace{.5em}
	\\
	{\textbf {Repeat:}}\vspace{-.5em}
	\begin{itemize}
		\item[$\bullet$] {\bf\textit {Distributed lower bounding procedure}}
		\begin{itemize}\vspace{-1.25em}
			\item[$\,$] 
			\item[$1:$] { Solve the (\ref{dlbd}) to optimality:} 
			each agent obtains the optimal solution $x_i^{k+1}$, and set $\widetilde x^{k+1}_i\leftarrow x^{k+1}_i$.
			\item[$2:$] { Call the DLBD Oracle for the compact set $Y_i$ at the query point $x^{k+1}_i$, i.e. $\widetilde Y_i^{k+1}=LORC(x^{k+1}_i,Y_i)$.}
		\end{itemize}
		\vspace{-.5em}
		\item[$\bullet$] {\bf\textit {Distributed upper bounding procedure}}
		\begin{itemize}\vspace{-1.25em}
			\item[$\,$] 
			\item[$3:$] { Solve the (\ref{dubd}) to optimality:} 
			each agent obtains the optimal solution $z_i^{k+1}$.
			\item[$4:$] {Call the DUBD Oracle for the compact set $Y_i$ at the query point $z_i^{k+1}$, i.e. $[\overline x_i^{k+1},\overline Y_i^{k+1},\varepsilon^{k+1}_i]=UORC(z_i^{k+1},Y_i)$.}		
	\end{itemize}
    \vspace{-.5em}
	\item[$\bullet$] {\bf\textit {Finite-time termination}}
	\begin{itemize}\vspace{-1.25em}
	\item[$\,$] 
	\item[$5:$] {Check whether the stopping criterion is satisfied (see: Section \ref{sec:Finite-time Convergence}).} 
	\item[$6:$] {\textbf{if}} {the stopping criterion is satisfied} {\textbf{then}}	
	\item[$7:$] 
	\begin{itemize}
		\item [$\ast$] Set the optimal solution of the (\ref{DRCO}) $x^{opt}_i\leftarrow \overline x^{k+1}_i$, {\bf{Terminate}}.
	\end{itemize}	 
	\item[$8:$] {\textbf{end if}}
	\item[$9:$] {\textbf{Set}} $k\leftarrow k+1$
	\end{itemize}
	\end{itemize}
\end{algorithm}
	\begin{remark}
	 In Steps 1 and 3 of the Algorithm \ref{alg:1}, we require to solve the (\ref{dlbd}) and (\ref{dubd}) to optimality within a finite number of time slots. In the literature, many approaches have been proposed to finite-time/fixed-time converges to a consensus solution in the discrete-time setting
	 \cite{rikos2022finite,chen2023privacy,jiang2022fast,yao2018distributed,prakash2019distributed,manitara2014distributed,mai2017local,carlone2014distributed,xie2017stop} or the continuous-time setting \cite{wang2020distributed,shi2020continuous,shi2022finite,garg2020fixed,song2021fixed,wu2021designing,shi2022finite,firouzbahrami2022cooperative}. Since the (\ref{dlbd}) and the (\ref{dubd}) are distributed convex optimization problems with local inequality constraints under uniformly strongly connected networks, we present two practical strategies for such problems to make all agents converge to their consensus optimal solutions within a finite number of time slots. The first strategy involves the D-RFP algorithm \cite{xie2018distributed} equipped with the finite-time consensus algorithm \cite{xie2017stop}. The second strategy entails transforming the (\ref{dlbd}) (or the (\ref{dubd})) into a distributed optimization problem with identical local objective functions via epigraphic reformulation (see: Example 2 in \cite{you2018distributed}), and then adopts the approach by exchanging the parameters of the local constraint function with neighboring agents to make all agents converge to an optimal consensus solution in a fixed number of time slots \cite{carlone2014distributed}.
    \end{remark}
	\begin{assumption}\label{Assumption 4}
		For distributed convex optimization problems with a finite number of inequality constraints under uniformly strongly connected networks, the decision variables of all agents $i\in\mathcal{V}$ can converge to an optimal consensus solution within finite time slots.
	\end{assumption}
\begin{remark}
	In Steps 2 and 4, it is necessary to globally solve the LLP problems. For the case that the constraint function $g_i$ is a differentiable and concave function with respect to $y_i$ for any agent $i\in\mathcal{V}$. According to the optimality condition of convex problems (see: Literature \cite{boyd2004convex} p267), we can obtain the optimal solution of the LLP problems by finding points that satisfy the Karush-Kuhn-Tucker (KKT) conditions. Literature \cite{ben2009robust,burger2012distributed} concludes some results of the solution to convex LLP problems under specific uncertain sets $Y_i$ and constraint functions $g_i$.
	For the case where the LLP is a nonconvex optimization problem, there is no direct method to find the global optimal solution. The two main indirect methods are the discretization method \cite{chen2005inequality} and the $\alpha BB$ method \cite{stein2012adaptive}, respectively. The former is based on iteratively approximating the LLP problems by successively discretizing the set $Y_i$. The latter focuses on adaptively constructing convex relaxations of the LLP problems.
\end{remark}
	\begin{assumption}\label{Assumption 5}
		For $i\in\mathcal{V}$, at any iteration $k+1>0$, the LLP is globally solved for the query point $\hat{x}_i$ either establishing $\max_{y_i\in Y_i}g_i(\hat{x}_i,y_i)\le 0$, or furnishing a point $\hat{y}_i$ such that $g_i(\hat{x}_i,\hat{y}_i)> 0$.
	\end{assumption}
\begin{lemma}\label{DLBD_lemma}
	For any $i\in\mathcal{V}$, take any $\widetilde Y^{0}_i\subset Y_i$. Under Assumptions \ref{Assumption 1}-\ref{Assumption 2} and \ref{Assumption 4}-\ref{Assumption 5}, suppose that ${x}^{k+1}$ is the optimal consensus solution of the (\ref{dlbd}) in the $(k+1)$-th iteration of the distributed lower bounding procedure, where ${x}_i^{k+1}=x^{k+1},\ \forall i\in\mathcal{V}$. Let $F^*$ be the optimal objective of the (\ref{DRCO}). Then,
	\begin{itemize}
		\item [\romannumeral 1)] $F(x_i^{k+1})\le F^*$ for all $i\in\mathcal{V}$ and all $k\ge 0$;
		\item [\romannumeral 2)] $F(x_i^{k+1})\le F(x_i^{k+2})$ for all $i\in\mathcal{V}$ and all $k\ge 0$;
		\item [\romannumeral 3)] There exists a point $p_i\in\mathbb{R}^n$ that the sequence $\left\{x_i^{k+1}\right\}_k$ converges to it, i.e.  $\mathop{\lim}\limits_{k\rightarrow +\infty} x_i^{k+1}=p_i $ for all $i\in\mathcal{V}$.
		\item [\romannumeral 4)] The limit point $p_i$ is feasible for the agent $i\in\mathcal{V}$ in the (\ref{DRCO}), i.e. $p_i \in X_i$.
	\end{itemize}
\end{lemma}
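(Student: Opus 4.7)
The plan is to handle the four claims in order, with (i) and (ii) being immediate consequences of the relaxation/monotonicity structure built into the procedure, while (iii) and (iv) require strict convexity together with compactness arguments.

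For (i), I would note that since each $\widetilde Y_i^k\subseteq Y_i$ is finite, one has $\widetilde X_i^k\supseteq X_i$ and hence $\widetilde X^k\supseteq X$; thus (\ref{dlbd}) is a relaxation of (\ref{DRCO}) and its optimum satisfies $F(x^{k+1})\le F^*$. For (ii), the DLBD Oracle never removes points from $\widetilde Y_i^k$, so $\widetilde Y_i^{k+1}\supseteq \widetilde Y_i^k$ and therefore $\widetilde X^{k+1}\subseteq\widetilde X^k$; since $x^{k+2}$ is optimal over the smaller set while $x^{k+1}$ is optimal over the larger one, the scalar sequence $\{F(x^{k+1})\}$ is non-decreasing.

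For (iii), parts (i) and (ii) together yield that $\{F(x^{k+1})\}$ is monotone and bounded, so it converges to some $F^\infty\le F^*$. The iterates themselves lie in the sublevel set $\{x:F(x)\le F^*\}$, which is bounded by Assumption \ref{Assumption 1} (strict convexity of $F$ together with the existence of the minimizer $x^*$ precludes unbounded directions of recession, by a standard convex-combination argument on the unit sphere around $x^*$), so Bolzano–Weierstrass delivers at least one subsequential limit $p$. To upgrade this to convergence of the full sequence I would show uniqueness of subsequential limits: if $p\neq p'$ were two such limits, then since each $\widetilde X^k$ is closed and every tail $\{x^{l+1}\}_{l\ge k}$ is contained in $\widetilde X^k$ by nesting, both $p$ and $p'$—and by convexity their midpoint $q=(p+p')/2$—would lie in $\bigcap_{k\ge 0}\widetilde X^k$. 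Strict convexity of $F$ gives $F(q)<F^\infty$, yet $F(x^{k+1})\le F(q)$ for every $k$ since $q\in\widetilde X^k$, yielding the contradiction $F^\infty\le F(q)<F^\infty$. Hence $x^{k+1}\to p$, and by the consensus hypothesis $p_i=p$ for every $i$.

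For (iv), I would argue by contradiction. Suppose $p_i\notin X_i$, so $\alpha:=g_i^{\max}(p_i)>0$. Berge's maximum theorem applied to the continuous function $g_i$ on the compact set $Y_i$ makes $x\mapsto g_i^{\max}(x)$ continuous, so $g_i^{\max}(x^{k+1})>\alpha/2$ for all sufficiently large $k$; by Assumption \ref{Assumption 5} the DLBD Oracle then appends to $\widetilde Y_i^{k+1}$ a point $\hat y_i^{k+1}$ with $g_i(x^{k+1},\hat y_i^{k+1})>\alpha/2$. Using compactness of $Y_i$ I extract a subsequence $\hat y_i^{k_j+1}\to\hat y^*$, along which joint continuity gives $g_i(p_i,\hat y^*)\ge\alpha/2>0$. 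On the other hand, for every $l\ge k_j+1$ the constraint $g_i(x^{l+1},\hat y_i^{k_j+1})\le 0$ is enforced in (\ref{dlbd}); letting $l\to\infty$ first yields $g_i(p_i,\hat y_i^{k_j+1})\le 0$, and then $j\to\infty$ yields $g_i(p_i,\hat y^*)\le 0$, contradicting the previous strict inequality. The main obstacle, in my view, is (iii): specifically, establishing boundedness of $\{x^{k+1}\}$ without an explicit coercivity hypothesis on $F$, and then squeezing full-sequence convergence out of strict convexity via the midpoint-in-the-intersection argument rather than settling for convergence along a subsequence.
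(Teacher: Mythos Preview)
Your treatment of (i) and (ii) matches the paper's exactly. For (iii), the paper is far terser than you: after noting that $\{F(x^{k+1})\}$ is monotone and bounded (hence convergent), it simply asserts in one line that ``due to strict convexity of $F$, the sequence $\{x_i^{k+1}\}_k$ converges to a point $p_i$.'' Your midpoint-in-$\bigcap_{k}\widetilde X^k$ argument supplies the rigorous content behind that assertion and is correct. One caution on the boundedness step you flag as the main obstacle: strict convexity together with existence of the \emph{constrained} minimizer $x^*$ does not by itself preclude recession directions of $F$ (e.g.\ $F(x_1,x_2)=e^{-x_1}+x_2^2$ is strictly convex with unbounded sublevel sets and no unconstrained minimizer); you would need either an unconstrained minimizer of $F$ or compactness of $\widetilde X^0$ to secure boundedness of the iterates. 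The paper does not address this point at all, so your argument is still strictly more careful.

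For (iv) you take a genuinely different route. The paper argues directly: for each $k$ with $g_i(x^{k+1},\hat y_i)>0$, the oracle inserts $\hat y_i$, so $g_i(x^l,\hat y_i)\le 0$ for all $l>k+1$; continuity of $g_i(\cdot,\hat y_i)$ together with Cauchyness of $\{x^{k+1}\}$ then forces $g_i(x^{k+1},\hat y_i)\to 0$, giving $g_i^{\max}(p_i)=0$. Your version instead extracts a convergent subsequence $\hat y_i^{k_j+1}\to\hat y^*$ via compactness of $Y_i$ and derives the contradictory pair $g_i(p_i,\hat y^*)\ge\alpha/2$ and $g_i(p_i,\hat y^*)\le 0$ by passing to the limit first in $l$ and then in $j$. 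Both approaches are valid; the paper's uses only continuity in $x$ at each fixed $y$, while yours invokes joint continuity and compactness of $Y_i$ (via Berge), but in exchange handles more transparently the interleaving of oracle branches (i) and (ii) and avoids the uniformity-of-$\delta$ issue lurking in the paper's quantifiers.
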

\begin{proof}
	{\it Proof of \romannumeral 1):} The feasible region $X$ of the (\ref{DRCO}) satisfies $$X=\mathop{\bigcap}\limits_{i=1}^m X_i\subset X_i.$$ For any agent $i\in\mathcal{V}$ and $k\ge 0$, it follows that $$\widetilde X_i^k=\left\{x|g_i(x,y_i)\le 0, \ \forall y_i\in \widetilde Y_i^k\right\}\supset X_i.$$ Since $\widetilde X^k=\bigcap_{i=1}^m \widetilde X_i^k\supset X$ for all $k\ge 0$, it can be concluded that $F(x^{k+1})\le F^*$. Therefore, it is satisfied that $F(x_i^{k+1})\le F^*$ for any $i\in\mathcal{V}$ and $k\ge 0$.
	\par
	{\it Proof of \romannumeral 2):} For any $k\ge 0$ and any $i\in\mathcal{V}$, after the step of the DLBD Oracle, there is
	$$\widetilde Y_i^k\subset \widetilde Y_i^{k+1}.$$
	Therefore, the feasible domain of (\ref{dlbd}) in two iterations satisfies the following relation:
	$$\widetilde X_i^k\supset \widetilde X_i^{k+1},\ \ \forall i\in\mathcal{V}.$$
	$x_i^{k+1}$ and $x_i^{k+2}$ are the solutions obtained from these two iterations of the distributed lower bouding procedure respectively. Therefore, we can conclude that $F(x_i^{k+1})\le F(x_i^{k+2})$ for any $i\in\mathcal{V}$ and $k\ge 0$.
	\par
	{\it Proof of \romannumeral 3):} 
	According to [Lemma \ref{DLBD_lemma}, \romannumeral 1)] and [Lemma \ref{DLBD_lemma}, \romannumeral 2)], we can conclude that the sequence $\left\{F(x_i^{k+1})\right\}_k$ is bounded and non-decreasing. It can be followed that the sequence $\left\{F(x_i^{k+1})\right\}_k$ is convergent, i.e.
	$$\mathop{\lim}\limits_{k\rightarrow +\infty} \vert F(x_i^k)-F(p_i)\vert= 0.$$
	Due to strict convexity of global objective function $F$, the sequence $\left\{x_i^{k+1}\right\}_k$ converges to a point $p_i$.
	\par
	{\it Proof of \romannumeral 4):} Refering to the proof idea in \cite{Mitsos}. we prove the feasibility of the limit point $p_i$ for the agent $i$. For $i\in\mathcal{V}$ and $k\ge 0$, consider the corresponding solution of the (\ref{LLP}) $\hat y_i$ in the DLBD Oracle for which $g_i(x_i^{k+1},\hat y_i)>0$. By reconstruction of the (\ref{dlbd}) we have
	\begin{align}\nonumber
		g_i(x_i^l,\hat y_i)\le 0,\quad \forall l>k+1> 0.
	\end{align}
	Since $g_i$ is a continuous function on $x$, for any $\epsilon>0$, there is a positive parameter $\delta$ satisfying: 
	\begin{align}\label{lemma1_3_1}
		g_i(x,\hat y_i)<\epsilon,\quad \Vert x-x_i^l\Vert<\delta,\ \forall l>k+1> 0.
	\end{align}
	Due to the convergence of $\left\{x_i^{k+1}\right\}_k$ [Lemma \ref{DLBD_lemma} \romannumeral 3)], we have for any $\delta>0$,
	\begin{equation}\label{lemma1_3_2}
		\exists K:\ \Vert x_i^l-x_i^{k+1}\Vert<\delta,\quad \forall l>k+1\ge K. 
	\end{equation}
	Combining the results of (\ref{lemma1_3_1}) and (\ref{lemma1_3_2}), we can obtain that for any $\epsilon>0$,
	\begin{align}\nonumber
		\exists K:\ g_i(x_i^{k+1},\hat y_i)<\epsilon,\quad \forall k+1\ge K. 
	\end{align}
	Since $g_i(x_i^{k+1},\hat y_i)>0$, we have $g_i(x_i^{k+1},\hat y_i)\rightarrow 0$. Therefore, the limit point $p_i$ is feasible:
	\begin{equation}\nonumber
		\mathop{\max}\limits_{y_i\in Y_i} g_i(p_i,y_i)=\mathop{\lim}\limits_{k\rightarrow \infty} g_i({x}_i^{k+1},\hat y_i)=0.
\vspace{-8mm}	
\end{equation}
\end{proof}
	\begin{proposition}
		For any $i\in\mathcal{V}$, take any $\widetilde Y^{0}_i\subset Y_i$. Suppose that Assumptions 1-2 and 4-5 hold. Then, the distributed lower bounding procedure is convergent, i.e. ${\sum}^m_{i=1}f_i({\widetilde x^{k+1}_i})\rightarrow F^*$. Moreover, for any $k\ge0$, it is satisfied that ${\sum}^m_{i=1}f_i({\widetilde x^{k+1}_i})\le F^*$.
	\end{proposition}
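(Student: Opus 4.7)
The plan is to reduce the proposition to Lemma \ref{DLBD_lemma} by exploiting the consensus assumption on the subproblem solutions. First, I observe that Assumption \ref{Assumption 4} applied to Step 1 of Algorithm \ref{alg:1} guarantees that at iteration $k+1$ all agents obtain the same consensus optimum $x^{k+1}$ of (\ref{dlbd}), so that $\widetilde x_i^{k+1}=x_i^{k+1}=x^{k+1}$ for every $i\in\mathcal{V}$. Consequently,
\begin{equation}\nonumber
\sum_{i=1}^m f_i(\widetilde x_i^{k+1})=\sum_{i=1}^m f_i(x^{k+1})=F(x^{k+1}).
\end{equation}
With this identification in hand, the second (``moreover'') claim is immediate from Lemma \ref{DLBD_lemma}\,\romannumeral1), which gives $F(x^{k+1})\le F^*$ for every $k\ge 0$.

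For the convergence claim, I would next pin down the limit of the sequence $\{x^{k+1}\}_k$. Lemma \ref{DLBD_lemma}\,\romannumeral3) asserts that for each agent $i$ the sequence $\{x_i^{k+1}\}_k$ converges to some point $p_i\in\mathbb{R}^n$; because the $x_i^{k+1}$ coincide across agents, all the $p_i$ coincide, yielding a single limit point $p\in\mathbb{R}^n$. Lemma \ref{DLBD_lemma}\,\romannumeral4) then tells me that $p=p_i\in X_i$ for every $i$, so $p\in\bigcap_{i=1}^m X_i=X$ and therefore $p$ is feasible for (\ref{DRCO}).

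Finally, I combine the feasibility of $p$ with the lower-bound property. By optimality of $F^*$ over $X$, we have $F(p)\ge F^*$; by Lemma \ref{DLBD_lemma}\,\romannumeral1) and continuity of $F$ (which is strictly convex on $\mathbb{R}^n$ by Assumption \ref{Assumption 1} and hence continuous), passing to the limit in $F(x^{k+1})\le F^*$ gives $F(p)\le F^*$. Therefore $F(p)=F^*$, and continuity of $F$ again yields
\begin{equation}\nonumber
\sum_{i=1}^m f_i(\widetilde x_i^{k+1})=F(x^{k+1})\longrightarrow F(p)=F^*,
\end{equation}
which is the desired convergence.

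I do not expect a serious obstacle: the proposition is essentially a packaging of Lemma \ref{DLBD_lemma} once the consensus reduction $\widetilde x_i^{k+1}=x^{k+1}$ is recorded. The only subtle point worth flagging explicitly is that Assumption \ref{Assumption 4} is what legitimately collapses the sum $\sum_i f_i(\widetilde x_i^{k+1})$ to $F(x^{k+1})$; without it the per-agent statements of Lemma \ref{DLBD_lemma} would not directly yield a statement about the \emph{global} cost, and the upper-bound half of the sandwich argument above would not close.
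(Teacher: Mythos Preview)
Your proposal is correct and follows essentially the same approach as the paper's own proof: both use Assumption~\ref{Assumption 4} to collapse the sum to $F(x^{k+1})$, invoke Lemma~\ref{DLBD_lemma}\,\romannumeral1) for the lower bound, Lemma~\ref{DLBD_lemma}\,\romannumeral3)--\romannumeral4) together with consensus to obtain a common feasible limit $p\in X$, and then sandwich $F(p)$ between $F^*$ and $F^*$. Your version is slightly more explicit about invoking continuity of $F$, but otherwise the arguments coincide.
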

	\begin{proof}
		Under Assumption \ref{Assumption 4}, the following relation holds for any agent $i\in\mathcal{V}$: $\widetilde x^{k+1}_i=x^{k+1}_i=x^{k+1}$, owing to the consensus of the (\ref{dlbd}) solution. Combining the result of [Lemma \ref{DLBD_lemma} \romannumeral 1)], we have
		\begin{equation}
			\mathop{\sum}\limits^m_{i=1}f_i(\widetilde x_i^{k+1})=\mathop{\sum}\limits^m_{i=1}f_i(x_j^{k+1})=F(x_j^{k+1})\le F^{*},\ \forall k\ge0.
		\end{equation}
	    where $j$ is the identify of an arbitrary agent satisfying $j\in\mathcal{V}$.\\
		According to [Lemma \ref{DLBD_lemma}, \romannumeral 3)] and [Lemma \ref{DLBD_lemma}, \romannumeral 4)], we can conclude that the sequence $\left\{\widetilde x_i^{k+1}\right\}_k$ converges to a consensus point $p$ for any agent $i\in\mathcal{V}$, where $p$ is a feasible point for the (\ref{DRCO}), i.e. $p
		\in{\bigcap}_{i=1}^m X_i=X$.
		It follws that
		\begin{equation}\nonumber
			F(p)\ge F^{*}.
		\end{equation}
		Since $$F(p)=\lim_{k\rightarrow\infty}\mathop{\sum}\limits^m_{i=1}f_i(\widetilde x_i^{k+1})\le F^{*},$$ we can obtain $F(p)= F^{*}$. To sum up, the sequence $\left\{{\sum}^m_{i=1}f_i(\widetilde x_i^{k+1})\right\}_k$ converges to $F^*$.
	\end{proof}

\begin{lemma}\label{DUBD_lemma}
	For all agents $i\in\mathcal{V}$, take any $\overline Y^0_i\subset Y_i$ and proper restriction parameter $\varepsilon^0_i$. Suppose that Assumptions \ref{Assumption 1}-\ref{Assumption 5} hold. Let ${z}^{k+1}$ be the optimal consensus solution of the (\ref{dubd}) in the $(k+1)$-th iteration of the distributed upper bounding procedure, in which ${z}_i^{k+1}=z^{k+1},\ \forall i\in\mathcal{V}$. Then, 
	\begin{itemize}
		\item [\romannumeral 1)] there exists a point $q_i$ that the sequence $\left\{z_i^{k+1}\right\}_k$ converges to it, i.e. $\mathop{\lim}\limits_{k\rightarrow+\infty}z_i^{k+1}=q_i$ for all $i\in\mathcal{V}$.
		\item [\romannumeral 2)]
		any agent $i$ can obtain a locally feasible point $\hat{x}_i$ for the (\ref{DRCO}) (i.e. $\hat{x}_i\in X_i$) within a finite number of iterations through the distributed upper bounding procedure.
	\end{itemize}	
\end{lemma}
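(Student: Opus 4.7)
The plan is to mirror the structure of Lemma \ref{DLBD_lemma} while accounting for the asymmetry noted in Example 1: since (\ref{dubd}) is neither a relaxation nor a restriction of (\ref{DRCO}), the monotone bounded-sequence argument used for the lower bounding procedure is not directly available. Two tools drive the proof: a compactness-plus-cutting-plane contradiction to control the limiting behavior of $\overline Y_i^k$, and a sandwich based on the interior point $x_0$ from Assumption \ref{Assumption 3} to control $F(z^{k+1})$ from above. Because the sandwich will require $\varepsilon_i^k\to 0$ for every agent, I would prove part \romannumeral 2) first and then use it to close part \romannumeral 1).

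For part \romannumeral 2), fix an agent $i$ and argue by contradiction: assume that from some iteration $K$ onward, $g_i^{\max}(z^{k+1})>0$ for all $k\ge K$. Then the DUBD Oracle never reduces $\varepsilon_i^k$ past $K$, so $\varepsilon_i^k\equiv\varepsilon_i^\infty>0$ for $k\ge K$, while a fresh $\hat y_i^k$ is appended to $\overline Y_i^{k+1}$ at every such iteration. By compactness of $Y_i$ together with boundedness of $\{z^{k+1}\}$, extract a joint convergent subsequence $(z^{k_j+1},\hat y_i^{k_j})\to(z^*,\hat y_i^*)$. Continuity of $g_i$ applied to $g_i(z^{k_j+1},\hat y_i^{k_j})>0$ yields $g_i(z^*,\hat y_i^*)\ge 0$. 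On the other hand, whenever $l>j$ the feasibility of $z^{k_l+1}$ for (\ref{dubd}) with $\hat y_i^{k_j}\in\overline Y_i^{k_l}$ forces $g_i(z^{k_l+1},\hat y_i^{k_j})\le -\varepsilon_i^\infty$; passing $l\to\infty$ and then $j\to\infty$ delivers $g_i(z^*,\hat y_i^*)\le -\varepsilon_i^\infty<0$, a contradiction. Hence some finite iteration must satisfy $g_i^{\max}(z^{k+1})\le 0$, at which point $\overline x_i^{k+1}=z^{k+1}\in X_i$. Sharpening the same dichotomy (finitely vs.\ infinitely many feasible iterations for $i$) rules out the former case entirely, so each agent is feasible at infinitely many iterations and $\varepsilon_i^k\to 0$.

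For part \romannumeral 1), extract any limit point $q$ of the bounded sequence $\{z^{k+1}\}$. To show $q\in X$, fix $i$ and look at the subsequence along which $z^{k_j+1}\to q$: either infinitely many $k_j$ are feasible for $i$, in which case continuity of $g_i^{\max}$ gives $g_i^{\max}(q)\le 0$ directly, or infinitely many are infeasible, in which case the cutting planes $\hat y_i^{k_j}$ admit a further sub-subsequential limit $\hat y_i^*$ with $g_i(q,\hat y_i^*)\ge 0$; combined with the constraint bound $g_i(z^{k_l+1},\hat y_i^{k_j})\le -\varepsilon_i^{k_l}\to 0$, this forces $g_i(q,\hat y_i^*)=0$ and thus $g_i^{\max}(q)\le 0$ as well. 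To show $F(q)\le F^*$, form $x_\lambda=(1-\lambda)x_0+\lambda x^*$ for $\lambda\in(0,1)$: convexity of $g_i$, strict feasibility of $x_0$, and compactness of $Y_i$ give a uniform gap $g_i(x_\lambda,y_i)\le-\eta(\lambda)<0$ over $Y_i$, so once $\varepsilon_i^k\le\eta(\lambda)$ for every $i$ simultaneously (possible since there are finitely many agents and each $\varepsilon_i^k\to 0$) one has $x_\lambda\in\overline X^k$ and hence $F(z^{k+1})\le F(x_\lambda)$. Sending $k\to\infty$ and then $\lambda\to 1$ gives $F(q)\le F^*$. Combined with $F(q)\ge F^*$ from $q\in X$, uniqueness of the optimum (Assumption \ref{Assumption 1}) forces $q=x^*$, so every convergent subsequence of $\{z^{k+1}\}$ has the same limit and the whole sequence converges to $q_i=x^*$.

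The main obstacle is the two-sided failure of monotonicity flagged above: appending points to $\overline Y_i^k$ tightens $\overline X^k$ whereas reducing $\varepsilon_i^k$ relaxes it, so $F(z^{k+1})$ carries neither a clean monotone behavior nor an a priori one-sided comparison against $F^*$. The interior-point sandwich repairs this, but it only becomes usable after part \romannumeral 2) certifies $\varepsilon_i^k\to 0$ for every $i$; obtaining that certification, and matching it with feasibility of $q$ on the boundary of $X$, is where the bulk of the bookkeeping sits.
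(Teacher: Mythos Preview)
Your proof is correct and takes a genuinely different route from the paper. The paper argues part \romannumeral 1) first, via a set sandwich $A^k\subset\overline X^k\subset B^k$ with $A^k=\bigcap_i\{x:g_i(x,y_i)\le-\varepsilon_i^k\ \forall y_i\in Y_i\}$ and $B^k=\bigcap_i\{x:g_i(x,y_i)\le 0\ \forall y_i\in\overline Y_i^k\}$, asserts $A^k\nearrow X$ and $B^k\searrow X$, and concludes $\overline X^k\to X$; part \romannumeral 2) is then deduced from the full-sequence convergence of \romannumeral 1) exactly as in Lemma \ref{DLBD_lemma}. You invert the dependency: \romannumeral 2) comes first from subsequential compactness and the strictly negative gap $-\varepsilon_i^\infty$, and then $\varepsilon_i^k\to 0$ together with the interior-point segment $x_\lambda$ pins every limit point of $\{z^{k+1}\}$ at $x^*$. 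Your ordering is the more robust one, because the paper's claims $A^{+\infty}=X$ and $B^{+\infty}=X$ tacitly presuppose what your \romannumeral 2) actually supplies (that $\varepsilon_i^k\to0$ for every $i$) together with a density statement about $\bigcup_k\overline Y_i^k$ that is never justified. The trade-off is that the paper's sandwich, once patched, also yields Proposition 2 immediately, whereas you need the separate $x_\lambda$ construction. One small point worth making explicit in your \romannumeral 1): the step ``$g_i(q,\hat y_i^*)=0$ and thus $g_i^{\max}(q)\le 0$'' uses that $\hat y_i^{k_j}$ is the argmax at $z^{k_j+1}$ together with continuity of $x\mapsto g_i^{\max}(x)$ (Berge's maximum theorem), so that $g_i^{\max}(q)=\lim_j g_i^{\max}(z^{k_j+1})=\lim_j g_i(z^{k_j+1},\hat y_i^{k_j})=g_i(q,\hat y_i^*)$.
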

\begin{proof}
	{\em {proof of \romannumeral 1):}}
	Before proving this result, we make the following settings:
	\begin{flalign}\label{8}
		\begin{aligned}
			&\overline X^k=\mathop{\bigcap}\limits_{i=1}^m\left\{x\in \mathbb{R}^n|g_i(x,y_i)\le -\varepsilon^k_i, \ \forall y_i\in \overline Y_i^{k}\right\}, \\
			&	{A}^k=\mathop{\bigcap}\limits_{i=1}^m\left\{x\in \mathbb{R}^n|g_i(x,y_i)\le -\varepsilon_i^k, \ \forall y_i\in Y_i\right\},\\
			&	{B}^k=\mathop{\bigcap}\limits_{i=1}^m\left\{x\in \mathbb{R}^n|g_i(x,y_i)\le 0, \ \forall y_i\in \overline Y^k_i\right\}.
		\end{aligned}		
	\end{flalign}
	Based on the above settings, we can easily derive the following result: for any $k\ge 0$,
	\begin{flalign}
		{A}^k\subset \overline X^k\subset B^k.
	\end{flalign}
	As the number of iterations of the distributed upper bounding procedure increases, the value of $\varepsilon_i^k$ decreases proportionally, while the number of elements in the set $\overline Y^k_i$ continues to increase. It follows that
	\begin{flalign}\label{10}
		\begin{aligned}
			{A}^k\subset{A}^{k+1}\subset\cdots\subset{A}^{+\infty}=X,\\
			{B}^k\supset{B}^{k+1}\supset\cdots\supset{B}^{+\infty}=X.
		\end{aligned}
	\end{flalign}
	According to the squeeze theorem, it can be concluded that
	\begin{equation}
		\mathop{\lim}\limits_{k\rightarrow \infty}  \overline X^k\rightarrow X.
	\end{equation}
	Since the global objective function is strictly convex and the global feasible region in the (\ref{dubd}) converges to that of the (\ref{DRCO}), the optimal point sequence $\left\{z_i^{k+1}\right\}_k$ of the (\ref{dubd}) is convergent for any agent $i\in\mathcal{V}$.
	\par
	{\em {proof of \romannumeral 2):}}
	Since the (\ref{dubd}) is neither a restriction nor a relaxation of the (\ref{DRCO}), the solution of the (\ref{dubd}) may not lie in the feasible domain of the (\ref{DRCO}). Referring to the proof idea in \cite{Mitsos}, we prove that the solution of the (\ref{dubd}) satisfying the constraints in the (\ref{DRCO}) can be obtained within a finite number of iterations of the distributed upper bounding procedure. For any agent $i\in\mathcal{V}$, suppose that at the $(k+1)$-th iteration of the distributed upper bounding procedure, the optimal point of agent $i$ obtained by solving the (\ref{dubd}) does not satisfy its local semi-infinite constraint, i.e. $z_i^{k+1}\notin X_i$, and let $\hat{y}_i$ be the corresponding maximum constraint violation point obtained through the DUBD Oracle. It follows that
	$$g_i(z_i^{k+1},\hat{y}_i)>0.$$
	For any $l>k+1> 0$, there exists a positive parameter $\epsilon$ that satisfies
	$$g_i(z_i^l,\hat{y}_i)\le -\epsilon<0.$$
	Given that $X_i$ and $Y_i$ are both compact sets and the constraint function $g_i(x,y_i)$ is continuous, we can conclude that for any $l>k+1> 0$
	\begin{equation}
		\exists \delta>0,\ \  g_i(x,\hat{y}_i)\le -\epsilon/2<0,\ \ \Vert x-z_i^l\Vert<\delta. 
	\end{equation}
	Since the sequence $\left\{z_i^{k+1}\right\}_k$ is convergent [Lemma \ref{DUBD_lemma} \romannumeral 1)], it is satisfied that for any $\delta>0$
	\begin{equation}
		\exists K:\ \Vert z_i^l-z_i^{k+1}\Vert<\delta,\ \ \forall l,k:\  l>k+1\ge K.
	\end{equation}
	Based on (12) and (13), we can infer that
	\begin{equation}\nonumber
		\exists K: \  g_i(z_i^{k+1},\hat{y}_i)\le -\epsilon/2<0,\ \ \forall k:\  k+1\ge K.
	\end{equation}
	Therefore, it can be inferred that for any agent $i\in\mathcal{V}$, there exists a finite number of iterations $K$ in the distributed upper bounding procedure, such that the optimal point obtained by solving the (\ref{dubd}) at the $K$-th iteration satisfies its local semi-infinite constraint in the (\ref{DRCO}).
\end{proof}
	\begin{proposition}
		For any $i\in\mathcal{V}$, take any $\overline Y^{0}_i\subset Y_i$ and proper restriction parameter $\varepsilon^0_i$. Suppose that Assumptions 1-5 hold. Then, the distributed upper bounding procedure is convergent, i.e. ${\sum}^m_{i=1}f_i({z^{k+1}_i})\rightarrow F^*$.
	\end{proposition}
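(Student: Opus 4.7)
The plan is to exploit the consensus property together with the convergence of the iterates already supplied by Lemma \ref{DUBD_lemma}, and then squeeze $F(q)$, where $q=\lim_{k\to\infty} z^{k+1}$, between $F^*$ from both sides. Since Assumption \ref{Assumption 4} forces (\ref{dubd}) to be solved to a common consensus optimum, $z_i^{k+1}=z^{k+1}$ for every $i\in\mathcal{V}$, so $\sum_{i=1}^m f_i(z_i^{k+1})=F(z^{k+1})$. Lemma \ref{DUBD_lemma}(i) provides the limit $z^{k+1}\to q$, and continuity of $F$ reduces the proposition to proving $F(q)=F^*$.

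For the lower bound $F(q)\ge F^*$ I would argue $q\in X$. Suppose, towards a contradiction, $g_i(q,y_i^*)>0$ for some agent $i$ and some $y_i^*\in Y_i$. Continuity of $g_i$ and $z^{k+1}\to q$ then give $g_i(z^{k+1},y_i^*)>0$ for all sufficiently large $k$, forcing case (i) of the DUBD Oracle and the insertion of a violating point $\hat{y}_i$ into $\overline Y_i^{k+1}$. Every subsequent iterate must satisfy $g_i(\cdot,\hat{y}_i)\le -\varepsilon_i^\ell\le 0$; passing to the limit first in $\ell$ and then over the successively added cuts, exactly as in the proof of Lemma \ref{DLBD_lemma}(iv), contradicts $g_i(q,y_i^*)>0$. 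Hence $q\in\bigcap_{i\in\mathcal{V}} X_i=X$ and $F(q)\ge F(x^*)=F^*$.

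The main step is the upper bound $F(q)\le F^*$. Use Assumption \ref{Assumption 3} to pick an interior point $x_0\in X$; by compactness of each $Y_i$ and continuity of $g_i$, the quantity $\mu=-\max_{i\in\mathcal{V},\,y_i\in Y_i} g_i(x_0,y_i)$ is strictly positive. For $\lambda\in(0,1)$ set $x^\lambda=\lambda x_0+(1-\lambda)x^*$; convexity of $g_i(\cdot,y_i)$ yields
\begin{equation*}
g_i(x^\lambda,y_i)\le \lambda g_i(x_0,y_i)+(1-\lambda)g_i(x^*,y_i)\le -\lambda\mu,\ \ \forall y_i\in Y_i,\ \forall i\in\mathcal{V}.
\end{equation*}
Since case (ii) of the DUBD Oracle must eventually be active at every iteration (once $z^{k+1}\in X_i$ has been attained, per Lemma \ref{DUBD_lemma}(ii), the monotone enlargement of $\overline Y_i^k$ prevents case (i) from recurring), the geometric rule $\varepsilon_i^{k+1}=\varepsilon_i^k/r$ drives $\varepsilon_i^k\to 0$. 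Hence for every fixed $\lambda>0$ there exists $K(\lambda)$ such that $\varepsilon_i^k\le \lambda\mu$ for all $i$ and all $k\ge K(\lambda)$, placing $x^\lambda$ inside $\overline X^k$ and giving $F(z^{k+1})\le F(x^\lambda)$. Letting $k\to\infty$ first and then $\lambda\to 0^+$, continuity of $F$ delivers $F(q)\le F(x^*)=F^*$, closing the squeeze.

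The principal obstacle is justifying $\varepsilon_i^k\to 0$: since the restriction parameter shrinks only in case (ii) of the oracle, one has to rule out that case (i) fires infinitely often for some agent. This will rely on combining Lemma \ref{DUBD_lemma}(ii) with the monotone enlargement of $\overline Y_i^k$, the compactness of $Y_i$, and the continuity of $g_i$, much in the spirit of the cut-accumulation argument already used in Lemma \ref{DLBD_lemma}(iv). Once $\varepsilon_i^k\to 0$ is established, the rest of the proof is a routine interior-point perturbation together with the continuity of $F$.
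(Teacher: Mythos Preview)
Your argument is correct, but it takes a genuinely different route from the paper. The paper's proof is a one-line squeeze built entirely on Lemma~\ref{DUBD_lemma}(i): with $A^k\subset\overline X^k\subset B^k$ as in (\ref{8}), the minimizers $a^{k+1},z^{k+1},b^{k+1}$ of $F$ over these three sets satisfy $F(b^{k+1})\le F(z^{k+1})\le F(a^{k+1})$, and the set convergences $A^k\uparrow X$, $B^k\downarrow X$ asserted in (\ref{10}) force both bounds to $F^*$. You instead establish $q\in X$ via a cut-accumulation contradiction (reproving, in effect, the $B^k\downarrow X$ direction) and handle the upper bound by a Slater perturbation $x^\lambda=\lambda x_0+(1-\lambda)x^*$ (reproving the $A^k\uparrow X$ direction). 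Your approach is more self-contained and makes explicit the key analytic fact---$\varepsilon_i^k\to 0$---that the paper tucks into the unproved monotone statement ``$\varepsilon_i^k$ decreases proportionally'' inside Lemma~\ref{DUBD_lemma}(i); the paper's approach is shorter because it leans on that lemma wholesale.

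One caution on your sketch: the parenthetical claim that ``once $z^{k+1}\in X_i$ has been attained, the monotone enlargement of $\overline Y_i^k$ prevents case~(i) from recurring'' is not correct as stated. Firing case~(ii) at agent $i$ leaves $\overline Y_i$ unchanged while shrinking $\varepsilon_i$, and other agents $j$ may simultaneously alter $\overline X_j^k$; either effect can move the new consensus minimizer $z^{k+2}$ back outside $X_i$. What you actually need---and correctly flag as the principal obstacle---is only that case~(i) cannot fire \emph{infinitely often}: if it did, $\varepsilon_i^k$ would eventually freeze at some $\varepsilon>0$, giving $g_i(z^{k+1},\hat y_i^{k})>0$ yet $g_i(z^{k+2},\hat y_i^{k})\le-\varepsilon$ for infinitely many $k$, which contradicts $z^{k+2}-z^{k+1}\to 0$ and uniform continuity of $g_i$ on the compact set. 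That is exactly the Lemma~\ref{DLBD_lemma}(iv)-style argument you invoke, so the gap is already closed by your outlined plan; just drop the stronger ``never recurs'' phrasing.
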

	\begin{proof}
		Our another paper \cite{xunhao} has proved the convergence of the distributed upper bounding procedure, for the sake of completeness a proof is given here. Under Assumption \ref{Assumption 4}, we have $z^{k+1}_i=z^{k+1}$ for all agents $i\in\mathcal{V}$.
		\\Let $a^{k+1}$ and $b^{k+1}$ be the optimal solutions of the optimization problems with $A^k$ and $B^k$ as feasible sets, respectively (see: formula (\ref{8})). It follows that $F(b^{k+1})\le F(z^{k+1})\le F(a^{k+1})$. According to the result of formula (\ref{10}), taking the limit on both sides, we have
		$$\!\mathop{\lim}\limits_{k\rightarrow+\infty}\mathop{\sum}\limits^m_{i=1}\!f_i({z^{k+1}_i})\!=\!\mathop{\lim}\limits_{k\rightarrow+\infty}\!\!F(z^{k+1})\!\ge\! \mathop{\lim}\limits_{k\rightarrow+\infty}\!\!F(b^{k+1})\!=\!F^*\!,$$
		$$\!\mathop{\lim}\limits_{k\rightarrow+\infty}\mathop{\sum}\limits^m_{i=1}\!f_i({z^{k+1}_i})\!=\!\mathop{\lim}\limits_{k\rightarrow+\infty}\!\!F(z^{k+1})\!\le\! \mathop{\lim}\limits_{k\rightarrow+\infty}\!\!F(a^{k+1})\!=\!F^*\!.$$
		Therefore, the sequence $\left\{{\sum}^m_{i=1}f_i({z^{k+1}_i})\right\}_k$ converges to $F^*$.
	\end{proof}
	
	\begin{proposition}
		For all agents $i\in\mathcal{V}$, take any $\overline Y^0_i\subset Y_i$ and proper restriction parameters $\varepsilon^0_i$. Suppose that Assumptions 1-5 hold. Then, for any iteration $k+1> 0$, it is satisfied that ${\sum}^m_{i=1}f_i({\overline x^{k+1}_i})\ge F^*$.
	\end{proposition}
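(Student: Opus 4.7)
The plan is to proceed by a straightforward case analysis based on the two branches of the DUBD Oracle. Recall that for each agent $i$, the vector $\overline{x}_i^{k+1}$ returned by the oracle is defined in exactly one of two ways: in case (i), when $g_i^{\max}(z_i^{k+1}) > 0$ (so that $z_i^{k+1} \notin X_i$), the oracle assigns $\overline{x}_i^{k+1}$ a value satisfying $f_i(\overline{x}_i^{k+1}) \to +\infty$; in case (ii), when $g_i^{\max}(z_i^{k+1}) \le 0$ (so that $z_i^{k+1} \in X_i$), it sets $\overline{x}_i^{k+1} = z_i^{k+1}$. I will split the agents according to which branch they fall into.

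First, I would handle the easy case: if there exists at least one agent $i$ for which case (i) occurs, then $f_i(\overline{x}_i^{k+1}) = +\infty$ by construction, and since every other $f_j$ is bounded below on its respective argument (the $f_j$ are convex and finite-valued on $\mathbb{R}^n$, and in particular at the other assigned points), the sum $\sum_{i=1}^m f_i(\overline{x}_i^{k+1})$ diverges to $+\infty$, which trivially exceeds $F^*$. This takes care of any iteration containing at least one infeasible query.

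Next, I would handle the remaining case in which every agent falls into branch (ii) at iteration $k+1$. Here, for each $i$, we have $\overline{x}_i^{k+1} = z_i^{k+1}$ together with $z_i^{k+1} \in X_i$. By Assumption~\ref{Assumption 4}, the solutions of (\ref{dubd}) satisfy consensus, i.e.\ $z_i^{k+1} = z^{k+1}$ for all $i \in \mathcal{V}$. Therefore $z^{k+1} \in X_i$ for every $i$, which gives $z^{k+1} \in \bigcap_{i=1}^m X_i = X$. Thus $z^{k+1}$ is feasible for the (\ref{DRCO}), and by definition of $F^*$ we obtain
\begin{equation}
\sum_{i=1}^m f_i(\overline{x}_i^{k+1}) \;=\; \sum_{i=1}^m f_i(z^{k+1}) \;=\; F(z^{k+1}) \;\ge\; F^*,
\end{equation}
which is the desired inequality.

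I do not expect a serious obstacle: the proposition follows directly from how the DUBD Oracle defines $\overline{x}_i^{k+1}$ in the two branches, combined with the consensus guarantee of Assumption~\ref{Assumption 4}. The only subtlety worth stating explicitly is that the $+\infty$ convention in case (i) must be interpreted so that a single infeasible agent forces the aggregate sum above $F^*$, which is the whole point of that assignment (it disqualifies such iterations as upper-bound candidates). No continuity or convergence argument is needed, and this result should be stated as a per-iteration property, in contrast to Proposition 2, which concerned the asymptotic limit.
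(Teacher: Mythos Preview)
Your proof is correct and follows essentially the same two-case argument as the paper: either some agent's DUBD Oracle returns the $+\infty$ branch (so the sum trivially dominates $F^*$), or all agents are in branch (ii), in which case consensus (Assumption~\ref{Assumption 4}) forces $z^{k+1}\in\bigcap_i X_i=X$ and hence $F(z^{k+1})\ge F^*$. The only difference is that the paper opens by citing [Lemma~\ref{DUBD_lemma}~\romannumeral2)] to assert that the all-feasible case eventually occurs, but as you correctly note this existence fact is irrelevant to the per-iteration inequality; your direct case split is cleaner and fully sufficient.
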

	\begin{proof}
		Based on [Lemma \ref{DUBD_lemma} \romannumeral 2)], the optimal solution $z_i^{k+1}$ of the (\ref{dubd}) for agent $i$ satisfies its semi-infinite constraint in the $(k+1)$-th iteration, i.e. $z_i^{k+1}\in X_i$.
		According to the assumption that the solution of the (\ref{dubd}) for each agent satisfies consensus, we can get $$z_j^{k+1}\in X_i,\ \ \forall j\in\mathcal{V}.$$
		Based on the above formula, we can infer that within a finite number of iterations $K$ of distributed upper bounding procedure, we can obtain a consensus point $z^{K}$ that satisfies $z^{K}\in\bigcap_{i=1}^m X_i$.\\
	    Due to the feasibility of $z^{K}$ for the (\ref{DRCO}), we can conclude that at the K-th iteration, $$\mathop{\sum}\limits^m_{i=1}f_i({\overline x^{K}_i})=\mathop{\sum}\limits^m_{i=1}f_i({z^{K}_i})=\mathop{\sum}\limits^m_{i=1}f_i({ z^{K}})\ge F^*.$$
		In other iterations, it is satisfied that $z^{k+1}\notin\bigcap_{i=1}^m X_i$. For this case, there exists at least one agent $i$ satisfying $f_i(\overline x^{k+1}_i)\rightarrow +\infty$ (see: DUBD Oracle in Section \ref{sec:Distributed Upper Bounding Procedure}). It follows that ${\sum}^m_{i=1}f_i({\overline x^{k+1}_i})\ge F^*$.\\ 
		Overall, for any iteration $k\ge 0$, it is satisfied that ${\sum}^m_{i=1}f_i({\overline x^{k+1}_i})\ge F^*$. 
	\end{proof}
\color{black}
	\subsection{Finite-time Convergence}
	\label{sec:Finite-time Convergence}
	Based on the results from Propositions 1-3, it can be concluded that both the distributed upper bounding procedure and the distributed lower bounding procedure of Algorithm 1 converge to the optimal solution of the (DRCO) while satisfying the following condition: for any $k\ge0$ and $i\in\mathcal{V}$,
	\begin{equation}
		\mathop{\sum}\limits^m_{i=1}f_i({\widetilde x^{k+1}_i})\le F^*\le \mathop{\sum}\limits^m_{i=1}f_i({\overline x^{k+1}_i}).
	\end{equation}
	where $\widetilde x^{k+1}_i$ is the solution of agent $i$ obtained by solving the (\ref{dlbd}), and $\overline x^{k+1}_i$ is the solution output by the DUBD Oracle in the $(k+1)$-th iteration of Algorithm 1.
	Moreover, from [Lemma \ref{DUBD_lemma} \romannumeral 2)], it follows that for a finite number of iterations $K$, we can definitely obtain a set of $\overline x^{K}_i$ satisfying ${\sum}^m_{i=1}f_i({\overline x^{K}_i})<+\infty$. 
	\par
	For a centralized system, as a result of the above results, suppose that we set the formula (\ref{stopping criterion}) as a stopping criterion for Algorithm 1.
	\begin{equation}\label{stopping criterion}
		\vert \mathop{\sum}\limits_{i=1}^{m} f_i(\overline x^{k+1}_i)-\mathop{\sum}\limits_{i=1}^{m}f_i(\widetilde x^{k+1}_i)\vert  \le \epsilon^f,
	\end{equation}
	where $\epsilon^f>0$ is a termination parameter. 
	\par
	When the stopping criterion (\ref{stopping criterion}) is satisfied, we have
	\begin{equation}\label{approximate optimality}
		\begin{aligned}
			&\hspace{-1mm}\vert\mathop{\sum}\limits_{i=1}^{m}\! f_i(\overline x^{k+1}_i)\!-\!F^*\vert\! \le\! \vert \mathop{\sum}\limits_{i=1}^{m}\! f_i(\overline x^{k+1}_i)\!-\!\mathop{\sum}\limits_{i=1}^{m}\!f_i(\widetilde x^{k+1}_i)\vert\! \le\! \epsilon^f\!,\\
			&\vert F^*\!-\!\mathop{\sum}\limits_{i=1}^{m}\!f_i(\widetilde x^{k+1}_i)\vert\! \le\! \vert \mathop{\sum}\limits_{i=1}^{m}\! f_i(\overline x^{k+1}_i)\!-\!\mathop{\sum}\limits_{i=1}^{m}\!f_i(\widetilde x^{k+1}_i)\vert\! \le\! \epsilon^f\!.\\
		\end{aligned}
	\end{equation}
    Let $\hat \epsilon^f$ be the accuracy of the approximate optimal solution of the (\ref{DRCO}) for the centralized case. Hence, we can obtain an $\hat \epsilon^f$-approximate optimal solution of the (\ref{DRCO}) by Algorithm 1, in which $\hat\epsilon^f=\epsilon^f$.
	\par
	However, compared to centralized systems, distributed systems lack a central processing unit that has access to global network information. Instead, each agent can only gather local information about its in-neighbors through agent communication. Consequently, it is not practical to directly sum the local objective function values of all agents to determine the stopping criterion (\ref{stopping criterion}). In order to ensure that Algorithm 1 is finite-time convergent, motivated by the finite-time consensus algorithm \cite{xie2017stop}, this subsection presents two termination methods specifically designed for uniformly strongly connected graphs. Furthermore, the effectiveness of these two methods is compared.
	
	\par 
	{ \bf{\emph {1) Method \uppercase\expandafter{\romannumeral1}:}}}
	The objective of the first method is to make all agents stop updating information when the local objective values obtained by the distributed upper bounding procedure and distributed lower bounding procedure satisfy the following stopping criterion:
	\begin{flalign}\label{stopping_criterion_1}
		\vert f_i(\overline x_i^{k+1})-f_i(\widetilde x_i^{k+1})\vert\le \epsilon^f,\quad\forall i\in\mathcal{V}.
	\end{flalign}
	\par
	To this purpose, our idea is to adapt the minimum-consensus algorithm in \cite{xie2017stop} to propose an internal iterative method of Algorithm 1 executed in a distributed way, so that each agent can track the minimum number that consecutively satisfies the following conditions.
	\begin{flalign}
		\vert f_i(\overline x_i^{k+1})-f_i(\widetilde x_i^{k+1})\vert\le \epsilon^f,\quad\forall i\in\mathcal{N}_i^{in}(t)\cup\left\{i\right\}.
	\end{flalign}
	where the time slot $t\ge 0$ represents the number of internal iterations, and the graph sequence $\left\{\mathcal{G}(t)\right\}$ varies with $t$. 
	\par
	Firstly, each agent sends two-bit data $[h_i(t), c_i(t)]$ to its out-neighbors between the time slots $t$ and $t+1$. 
	Then, each agent calculates (\ref{19})-(\ref{20}) according to its and its in-neighbors' information at time slot $t+1$.
	\begin{flalign}\label{19}
		&	h_i(t+1)=\mathop{\min}\limits_{j\in N^{in}_i(t)\cup\left\{i\right\}}\left\{h_i(t),c_i(t)\right\}+1,&
	\end{flalign}
	\begin{flalign}\label{20}
		&	c_i(t+1)=
		\begin{cases}
			c_i(t)+1,\quad \vert f_j(\overline x_j^{k+1})-f_j(\widetilde x_j^{k+1})\vert \le\epsilon^f, \\ \ \ \qquad\qquad\ \ \forall j\in N^{in}_i(t)\cup\left\{i\right\},\\
			0,\qquad \qquad \quad otherwise,
		\end{cases}&
	\end{flalign}
	where $h_i(0)=0$, $c_i(0)=0$.
	\par
	In the following, we show how to use $h_i$ to check whether the solutions obtained by Algorithm 1 satisfies the stopping criterion (\ref{stopping_criterion_1}).
	\begin{proposition}\label{proposition 1}
		Under Assumption \ref{Assumption 2}, the calculation is performed according to (\ref{19})-(\ref{20}). If at the $[T(m-1)+1]$-th time slot, there is an agent $i\in\mathcal{V}$ satisfying $h_i(t)\ge T(m-1)+1$, the uniformly strongly connected network reaches the stopping criterion of (\ref{stopping_criterion_1}). 
	\end{proposition}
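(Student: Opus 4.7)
The plan is to reinterpret equations~(19)-(20) as a minimum-consensus scheme with unit increment that propagates, across the uniformly strongly connected network, the Boolean information ``is the local gap at agent $j$ below $\epsilon^f$?'' Let $S=\{j\in\mathcal V : |f_j(\overline x_j^{k+1})-f_j(\widetilde x_j^{k+1})|\le\epsilon^f\}$. Reaching the stopping criterion~(17) is equivalent to $S=\mathcal V$, so the goal is to show that if $h_i(t^*)\ge T(m-1)+1$ at the time slot $t^*=T(m-1)+1$, then every agent lies in $S$.

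I would first dispatch two easy preliminaries. An induction on $t$ gives $h_i(t)\le t$ and $c_i(t)\le t$ for every agent and every time slot, so that under the hypothesis the inequality $h_i(t^*)\ge T(m-1)+1$ is in fact attained with equality. The second preliminary is the monotone propagation lemma: whenever $h_i(t+1)\ge s$, the definition~(19) forces $\min_{j\in N_i^{in}(t)\cup\{i\}}\{h_j(t),c_j(t)\}\ge s-1$, so for every such $j$ one has simultaneously $h_j(t)\ge s-1$ and $c_j(t)\ge s-1$. Coupled with the observation that $c_j(\tau)\ge 1$ forces $j\in S$ (because rule~(20) requires the gap condition to hold for every element of $N_j^{in}(\tau-1)\cup\{j\}$, hence in particular for $j$ itself), this is the engine of the argument.

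The main step is a reverse-time unrolling. For each $\ell\ge 0$ I would introduce the backward reach set $B_i(t^*,\ell)$ defined inductively by $B_i(t^*,0)=\{i\}$ and $B_i(t^*,\ell+1)=\bigcup_{j\in B_i(t^*,\ell)}(N_j^{in}(t^*-\ell-1)\cup\{j\})$. Applying the propagation lemma $\ell$ times to the hypothesis $h_i(t^*)\ge T(m-1)+1$, one obtains $c_j(t^*-\ell)\ge T(m-1)+1-\ell$ for every $j\in B_i(t^*,\ell)$ and every $1\le\ell\le T(m-1)$. Since the right-hand side stays at least $1$ throughout this range, every agent reached within $T(m-1)$ backward hops is forced into $S$. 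The final step is to invoke Assumption~\ref{Assumption 2}: because each single-step in-neighborhood $N_j^{in}(\tau)\cup\{j\}$ contains the self-loop $\{j\}$, the union of the communication graphs over any window of length $T(m-1)$ contains a directed path from any agent to any other, and reversing time direction this means $\bigcup_{\ell=1}^{T(m-1)}B_i(t^*,\ell)=\mathcal V$. Combined with the previous paragraph this yields $S=\mathcal V$, which is precisely the stopping criterion~(17).

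The hardest part will be the bookkeeping in the reverse-time unrolling: one must align the hop count $\ell$ with the ``$+1$'' subtracted at each application of~(19) so that the residual counter value remains at least one for every agent visited, and one must invoke the $T(m-1)$-window connectivity of the time-shifted union graph at precisely the right time indices $t^*-\ell$. Once this accounting is handled carefully, the remaining steps are elementary and rely only on the definitions in~(19)-(20), the self-loop structure of $N_j^{in}(\tau)\cup\{j\}$, and Assumption~\ref{Assumption 2}.
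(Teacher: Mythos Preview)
Your proposal is correct and follows essentially the same approach as the paper: both unroll the recursion~(19) backward in time from the agent with $h_i(t^*)\ge T(m-1)+1$, use the resulting lower bounds on $c_j$ to force each reached agent into $S$, and invoke uniform strong connectivity to guarantee that every agent is reached within $T(m-1)$ backward hops. Your formalization via the backward reach sets $B_i(t^*,\ell)$ is more careful than the paper's proof, which simply fixes a directed path $(i,i_1),\dots,(i_d,j)$ of length $d\le T(m-1)-1$ and steps back along it, but the underlying idea is identical.
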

	\begin{proof}
		We refer to the proof idea in \cite{xie2017stop} to prove this proposition.
		From Assumption \ref{Assumption 2} that $\mathcal G(t:t+T)$ is strongly connected, there exists a directed path $(i,i_1),(i_1,i_2),...,(i_d,j)$ from $i$ to $j$ with $d\le T(m-1)-1$ for any $i\neq j\in\mathcal{V}$. \\
		Suppose that there is an agent $j\in\mathcal{V}$ satisfying $h_j(t)\ge T(m-1)+1$ at time slot t, then at time slot $(t-1)$, it follows that 
		\begin{flalign}\nonumber
			h_{i_{d}}(t-1)\ge T(m-1), \\ \nonumber
			c_{i_{d}}(t-1)\ge T(m-1). \nonumber
		\end{flalign}
		Similarly, when the time slot is $(t-2)$, it follows that 
		\begin{flalign}\nonumber
			h_{i_{d-1}}(t-2)\ge T(m-1)-1, \\ \nonumber
			c_{i_{d-1}}(t-2)\ge T(m-1)-1. \nonumber
	\end{flalign}
	Repeat the same steps, it follows that
	\begin{flalign}\nonumber
		h_{i}(t-d-1)\ge T(m-1)-d\ge 1, \\ \nonumber
		c_{i}(t-d-1)\ge T(m-1)-d\ge 1. \nonumber
	\end{flalign}
	Therefore, the stopping criterion (\ref{stopping_criterion_1}) is reached at time slot $(t-d-1)$.
\end{proof}
\par
Assuming that there is an agent $i$ satisfying $h_i\ge T(m-1)+1$, this agent will issue an exit command to stop updating its information. Since the values of $\widetilde x^{k+1}_i$ and $\overline x^{k+1}_i$ do not change with time slot $t$ in this termination method, it is obvious that all other agents also satisfy $h_j\ge T(m-1)+1,\  i\neq j\in\mathcal{V}$. Overall, all agents can stop updating information simultaneously by their own exit commands, in contrast to the Literature \cite{xie2017stop} where they cannot terminate simultaneously and need to broadcast the exit command to their out-neighbors.
\begin{theorem}
	Under Assumptions 1-5, Algorithm 1 terminates finitely and generates a feasible $\widetilde \epsilon^f$-approximate optimal consensus solution of the (\ref{DRCO}), where the accuracy of approximate optimality is $\widetilde \epsilon^f=m\epsilon^f$. This holds for any reduciton parameter $r\ge 1$, any finite sets $\widetilde Y_i^0\subset Y_i$, $\overline Y_i^0\subset Y_i$, and proper restriction parameter $\varepsilon_i^0>0$.
\end{theorem}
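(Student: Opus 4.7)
The plan is to establish the theorem in four forward steps: (i) show the per-agent stopping test is eventually satisfied simultaneously at every agent; (ii) convert this into finite termination via Proposition \ref{proposition 1}; (iii) extract feasibility of the returned point; and (iv) derive the $m\epsilon^f$ accuracy bound from (\ref{stopping_criterion_1}).

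First I would argue that there exists an outer index $K$ such that, for all $k+1 \ge K$, the local test $|f_i(\overline x_i^{k+1})-f_i(\widetilde x_i^{k+1})| \le \epsilon^f$ holds for every $i \in \mathcal{V}$ simultaneously. By Assumption \ref{Assumption 4} we have $\widetilde x_i^{k+1}=x^{k+1}$ for all $i$, and by Lemma \ref{DUBD_lemma}(ii) together with Assumption \ref{Assumption 4}, for $k$ past some threshold $K_1$ the consensus point $z^{k+1}$ lies in $\bigcap_{i=1}^m X_i$, so $\overline x_i^{k+1}=z^{k+1}$ for all $i$. Propositions 1 and 2 give $\sum_i f_i(x^{k+1}) \to F^*$ and $\sum_i f_i(z^{k+1}) \to F^*$; combined with the strict convexity of $F$ (Assumption \ref{Assumption 1}), which forces a unique minimizer $x^*$ on $X$, both $\{x^{k+1}\}$ and $\{z^{k+1}\}$ must converge to $x^*$. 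Continuity of each $f_i$ then yields the per-agent gap going to zero, so the test is eventually satisfied uniformly in $i$.

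Second, I would invoke the inner consensus mechanism. Once the outer index crosses $K$, the values $\widetilde x_i^{k+1}$ and $\overline x_i^{k+1}$ used in (\ref{20}) are frozen with respect to the inner time slot $t$, so the ``otherwise'' branch in (\ref{20}) never fires at any agent, and $c_i(t)$ becomes strictly increasing at each $i$. Proposition \ref{proposition 1} guarantees that within at most $T(m-1)+1$ time slots some agent observes $h_i(t) \ge T(m-1)+1$; because the input to (\ref{19})--(\ref{20}) is static in $t$, this threshold is reached at every agent at the same inner slot, so all agents terminate simultaneously and Algorithm \ref{alg:1} halts after finitely many outer iterations.

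Third, feasibility of the output is immediate: if any agent $j$ had $z^{k+1} \notin X_j$, the DUBD Oracle would force $f_j(\overline x_j^{k+1}) \to +\infty$, violating (\ref{stopping_criterion_1}). Hence upon termination $z^{k+1} \in \bigcap_{i=1}^m X_i = X$, and $x_i^{opt}=\overline x_i^{k+1}=z^{k+1}$ is a feasible consensus solution of the (\ref{DRCO}).

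Finally, summing (\ref{stopping_criterion_1}) over $i$ and applying the triangle inequality yields
\begin{equation}\nonumber
\Bigl|\sum_{i=1}^m f_i(\overline x_i^{k+1}) - \sum_{i=1}^m f_i(\widetilde x_i^{k+1})\Bigr| \le m\epsilon^f,
\end{equation}
and combining this with the sandwich $\sum_i f_i(\widetilde x_i^{k+1}) \le F^* \le \sum_i f_i(\overline x_i^{k+1})$ from (14) delivers $|F(x^{opt})-F^*|\le m\epsilon^f$, i.e. $\widetilde\epsilon^f = m\epsilon^f$ accuracy. The hard part will be step (i): the summed convergence from Propositions 1--2 does not by itself rule out per-agent gaps that cancel in the sum, so I must lean on strict convexity and uniqueness of $x^*$ to pin both sequences to the same limit and then use continuity to upgrade summed convergence to pointwise convergence of every $f_i$ value.
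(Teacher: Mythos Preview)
Your proposal is correct and follows the same route as the paper: invoke Propositions 1--4 for finite termination, read off feasibility from the DUBD Oracle's behavior at the terminating iteration, and bound the gap via the triangle inequality combined with the sandwich (14)/(\ref{approximate optimality}). Your step (i) is in fact more careful than the paper's own proof, which simply asserts that finite-time convergence is ``straightforward'' from Propositions 1--4 without spelling out the per-agent argument you correctly flag as the hard part; if you want to tighten it further, cite Lemmas \ref{DLBD_lemma}(iii)--(iv) and \ref{DUBD_lemma}(i) directly for convergence of the sequences rather than inferring it from the sums alone.
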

\begin{proof}
	From Propositions 1-4, it is straightforward to derive the finite-time convergence of Algorithm 1. 
	Assume that Algorithm 1 terminates at the $k1$-th iteration. During the $k1$-th iteration, the $\overline x^{k1}_i$ generated by DUBD Oracle satisfies the following conditions: $$x^{k1}_i\in X_i,\ \forall i\in\mathcal{V},$$ and it is assigned to $x^{opt}_i$. Under Assumption 5, we can conclude that the solution obtained by Algorithm 1 satisfies the feasibility of the $(\ref{DRCO})$.
	\par
	Therefore, Algorithm 1 converges to a feasible consensus solution in a finite number of iterations. Next, we demonstrate the approximate accuracy of the solution obtained by Algorithm 1 to the optimal solution of the (\ref{DRCO}).
	\\
	Due to the property of triangle inequality, it follows that
	\begin{flalign}\nonumber
		\begin{aligned}
			\!\vert\! \mathop{\sum}\limits_{i=1}^{m}\! f_i(\overline x^{k+1}_i)\!-\!\mathop{\sum}\limits_{i=1}^{m}\!f_i(\widetilde x^{k+1}_i)\!\vert\! &=\vert\! \mathop{\sum}\limits_{i=1}^{m} [f_i(\overline x^{k+1}_i)\!-\!f_i(\widetilde x^{k+1}_i)]\vert\! \\&\le\mathop{\sum}\limits_{i=1}^{m} \!\vert f_i(\overline x^{k+1}_i)\!-\!f_i(\widetilde x^{k+1}_i)\vert\! \\
			&\le m \epsilon^f=\widetilde \epsilon^f.
		\end{aligned}
	\end{flalign}	
	Combining the inequality relations of (\ref{approximate optimality}), the accuracy of the approximate optimal solution obtained by Algorithm 1 is $\widetilde \epsilon^f=m\epsilon^f$.
\end{proof}
\par
Note that the accuracy of the approximate optimal solution is only related to the number of agents $m$ and the value of the termination parameter. There is a drawback to this termination method: when the number of agents in the multi-agent system is very large, an extremely small value of the termination parameter is required to guarantee the accuracy of the approximate optimal solution, which imposes high demands on the computational accuracy during the numerical solution, and greatly increases the number of iterations $k$ of Algorithm 1.
\par 
{ \bf{\emph {2) Method \uppercase\expandafter{\romannumeral2}:}}}
To address this issue, we make an improvement to the aforementioned method. The following is the stopping criterion:
\begin{flalign}\label{stopping_criterion_2}
	\mathop{\sum}\limits_{j\in N^{in}_i(t)\cup\left\{i\right\}}\vert f_j(\overline x_j^{k+1})-f_j(\widetilde x_j^{k+1})\vert\le \epsilon^f,\quad \forall i\in\mathcal{V}.
\end{flalign}
\par
 Our primary idea is to compute the value of $e_i=\vert f_i(\overline x_i^{k+1})-f_i(\widetilde x_i^{k+1})\vert$ for each agent and to transmit the value $e_i$ to its out-neighbors after the calculation of the distributed lower bounding procedure and distributed upper bounding procedure of Algorithm 1. Then, we adapt the Method \uppercase\expandafter{\romannumeral1} so that each agent can keep track of the minimum number that consecutively satisfies the following conditions:
\begin{flalign}
	\mathop{\sum}\limits_{j\in N^{in}_i(t)\cup\left\{i\right\}} \vert f_j(\overline x_j^{k+1})-f_j(\widetilde x_j^{k+1})\vert\le \epsilon^f.
\end{flalign}
\par
The adapted termination method is shown below:
firstly, each agent sends two-bit data $[h_i(t), c_i(t)]$ to its out-neighbors between the time slots t and t+1.
Then, each agent calculates (\ref{23})-(\ref{24}) according to its own and in-neighbors' information at time slot $t+1$.
\begin{flalign}\label{23}
	&	h_i(t+1)=\mathop{\min}\limits_{j\in N^{in}_i(t)\cup\left\{i\right\}}\left\{h_i(t),c_i(t)\right\}+1,&
\end{flalign}
\begin{flalign}\label{24}
	&	c_i(t+1)=
	\begin{cases}
		c_i(t)+1,\quad \mathop{\sum}\limits_{j\in N^{in}_i(t)\cup\left\{i\right\}} e_j\le \epsilon^f,\\
		0,\qquad \qquad \quad otherwise,
	\end{cases}&
\end{flalign}
where $h_i(0)=0$, $c_i(0)=0$, $e_j=\vert f_j(\overline x_j^{k+1})-f_j(\widetilde x_j^{k+1})\vert$.
\par
Similar to the Method \uppercase\expandafter{\romannumeral1}, we show how to check whether the solutions of Algorithm 1 satisfy the stopping criterion (\ref{stopping_criterion_2}) by using $h_i$.
\begin{proposition}\label{proposition 5}
	Under Assumption \ref{Assumption 2}, the calculation is performed according to (\ref{23})-(\ref{24}).  If at the $[T(m-1)+1]$-th time slot, there is an agent $i\in\mathcal{V}$ satisfying $h_i(t)\ge T(m-1)+1$, the network reaches the stopping criterion of (\ref{stopping_criterion_2}).
\end{proposition}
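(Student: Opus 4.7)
The plan is to mimic the proof of Proposition \ref{proposition 1} verbatim, since the recursion (\ref{23})--(\ref{24}) has exactly the same structure as (\ref{19})--(\ref{20}) except for the condition that triggers the $c_i$-increment. Therefore only the final interpretation step changes: whenever $c_i(t)\ge 1$, we conclude the \emph{sum-over-in-neighbors} criterion (\ref{stopping_criterion_2}) held at agent $i$ at time slot $t-1$, rather than the individual criterion (\ref{stopping_criterion_1}).

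The first step is to invoke Assumption \ref{Assumption 2} to guarantee that, starting from any time slot, the union graph $\mathcal{G}(t{:}t+T)$ is strongly connected, so that for any ordered pair $i\neq j$ in $\mathcal{V}$ there exists a directed path $(i,i_1),(i_1,i_2),\ldots,(i_d,j)$ with $d\le T(m-1)-1$. This is the only use of the connectivity assumption and it is identical to the setup in Proposition \ref{proposition 1}.

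Next I would carry out the backward induction on the chain. Assume an agent $j\in\mathcal{V}$ satisfies $h_j(t)\ge T(m-1)+1$ at time slot $t=T(m-1)+1$. Unrolling (\ref{23}) one step at a time along the reverse of the path, the minimum rule forces both $h_{i_d}(t-1)\ge T(m-1)$ and $c_{i_d}(t-1)\ge T(m-1)$; iterating, $h_{i_{d-\ell}}(t-\ell-1)\ge T(m-1)-\ell$ and $c_{i_{d-\ell}}(t-\ell-1)\ge T(m-1)-\ell$ for each $\ell=0,1,\ldots,d$. Setting $\ell=d$ yields $c_i(t-d-1)\ge T(m-1)-d\ge 1$.

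Finally I would invoke the update rule (\ref{24}): the bound $c_i(t-d-1)\ge 1$ is only possible if, at time slot $t-d-2$, the sum condition
\begin{equation*}
\sum_{\ell\in N^{in}_i(t-d-2)\cup\{i\}}\bigl|f_\ell(\overline{x}_\ell^{k+1})-f_\ell(\widetilde{x}_\ell^{k+1})\bigr|\le\epsilon^f
\end{equation*}
was satisfied, which is precisely (\ref{stopping_criterion_2}) at agent $i$. Since $i$ was arbitrary, the network has reached the stopping criterion. The main obstacle, as in Proposition \ref{proposition 1}, is bookkeeping the simultaneous backward decrement of both counters along a path whose edges live in a time-varying graph; this is handled by the strong-connectivity of the union $\mathcal{G}(t{:}t+T)$, so no new technical ingredient beyond the one already used for Method~\uppercase\expandafter{\romannumeral1} is required.
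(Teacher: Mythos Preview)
Your proposal is correct and is exactly the approach the paper takes: the paper's own proof of Proposition~\ref{proposition 5} consists of the single sentence ``Similar to the proof of Proposition~4, the above result is straightforward,'' and you have simply written out that replay in full, correctly noting that only the triggering condition in the $c_i$-update changes from the individual bound to the sum-over-in-neighbors bound.
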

\begin{proof}
	Similar to the proof of Proposition 4, the above result is straightforward.
\end{proof}
\begin{theorem}
	Under Assumptions 1-5, suppose that the graph sequence $\mathcal{G}(t)$ is known. For any reduciton parameter $r\ge 1$, any finite sets $\widetilde Y_i^0\subset Y_i$, $\overline Y_i^0\subset Y_i$, and proper restriction parameter $\varepsilon_i^0>0$, Algorithm 1 terminates finitely and generates a feasible $\overline \epsilon^f$-approximate optimal consensus solution of the (\ref{DRCO}), where the accuracy of approximate optimality $\overline \epsilon^f$ is an optimal objective of the following linear program:
	\begin{flalign} \label{linear program}
		\begin{aligned} 
			\overline \epsilon^f=&{\max} &  & \mathop{\sum}\limits^m_{i=1}e_i\\
			&\rm{s.t.} &&\mathop{\sum}\limits^m_{i=1}\mathop{\sum}\limits^T_{t=1}\mathop{\sum}\limits_{j\in N^{in}(t)_i\cup\left\{i\right\}}e_j \le m T\cdot \epsilon^f,\\
			&&& 0\le e_i\le \epsilon^f,\quad i=1,...,m.
		\end{aligned}
	\end{flalign}
\end{theorem}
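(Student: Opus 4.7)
The plan is to mirror the structure of the proof of Theorem 1, substituting Method II's internal-iteration analysis for Method I's, and then invoking the linear program (\ref{linear program}) to quantify the approximation accuracy. The main ingredients are Propositions 1--3 (convergence of the two bounds to $F^*$), Lemma \ref{DUBD_lemma}(\romannumeral 2) (which forces the DUBD Oracle to produce locally feasible iterates in finitely many iterations), and Proposition \ref{proposition 5} (distributed detection of the stopping criterion (\ref{stopping_criterion_2})).

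First, I would establish finite-time termination. Propositions 1--3 yield $\sum_{i=1}^m f_i(\widetilde x_i^{k+1})\to F^*$ and $\sum_{i=1}^m f_i(\overline x_i^{k+1})\to F^*$ as $k\to\infty$. By Lemma \ref{DUBD_lemma}(\romannumeral 2) the fallback branch $f_i(\overline x_i^{k+1})\to+\infty$ of the DUBD Oracle is activated only finitely often, so eventually $\overline x_i^{k+1}=z_i^{k+1}$ for every $i$, and continuity of each $f_i$ together with consensus forces each individual gap $e_i=\vert f_i(\overline x_i^{k+1})-f_i(\widetilde x_i^{k+1})\vert\to 0$. Hence for the prescribed $\epsilon^f>0$ there is a finite outer iteration at which $\sum_{j\in N_i^{in}(t)\cup\{i\}}e_j\le\epsilon^f$ holds at every agent regardless of the instantaneous neighborhood pattern. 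Proposition \ref{proposition 5} then guarantees that within finitely many internal time slots some agent observes $h_i\ge T(m-1)+1$, and by the simultaneous-termination argument used after Proposition \ref{proposition 1} all agents stop updating at the same time.

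Next, I would verify feasibility and derive the accuracy bound. Since $\{i\}\subseteq N_i^{in}(t)\cup\{i\}$, the stopping criterion (\ref{stopping_criterion_2}) forces $e_i\le\epsilon^f<+\infty$ at every agent; by construction of the DUBD Oracle this excludes the branch that sends $f_i(\overline x_i^{k+1})\to+\infty$, so $\overline x_i^{k+1}=z_i^{k+1}\in X_i$ must hold. Consensus (Assumption \ref{Assumption 4}) then gives $z^{k+1}\in\bigcap_{i=1}^m X_i=X$, making the returned $x_i^{opt}$ globally feasible. For the accuracy, the counter dynamics (\ref{23})--(\ref{24}) imply that the event $h_i\ge T(m-1)+1$ certifies the pointwise inequality $\sum_{j\in N_i^{in}(t)\cup\{i\}}e_j\le\epsilon^f$ at every agent for at least $T$ consecutive internal time slots, i.e., a full connectivity window. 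Because $e_j$ depends only on the outer iteration $k$, summing these pointwise inequalities over $i\in\mathcal{V}$ and $t=1,\ldots,T$ reproduces exactly the aggregate constraint of (\ref{linear program}); together with the trivial bounds $0\le e_i\le\epsilon^f$ obtained by isolating the singleton $\{i\}$, the vector $(e_i)_{i=1}^m$ is feasible for the LP, so $\sum_{i=1}^m e_i\le\overline\epsilon^f$. Combining this with the triangle inequality and the sandwich relation (\ref{approximate optimality}) gives $\bigl\vert\sum_{i=1}^m f_i(\overline x_i^{k+1})-F^*\bigr\vert\le\sum_{i=1}^m e_i\le\overline\epsilon^f$.

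The main obstacle is the claim that at termination the pointwise stopping inequality has held over a full window of $T$ consecutive slots, not merely at a single instant. Establishing this cleanly requires unrolling the counter updates (\ref{23})--(\ref{24}) backward in time along a directed path of length at most $T(m-1)$, in the spirit of the proof of Proposition \ref{proposition 5}; once that bookkeeping is in place, LP feasibility is automatic and the approximation bound follows from optimality of $\overline\epsilon^f$.
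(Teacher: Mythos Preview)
Your proposal is correct and follows essentially the same route as the paper: finite termination and feasibility are deduced from Propositions 1--3 and 5 (the paper simply says ``similar to the proof of Theorem 1''), and the accuracy bound is obtained by observing that at termination the pointwise inequality $0\le e_i\le\sum_{j\in N_i^{in}(t)\cup\{i\}}e_j\le\epsilon^f$ holds for every agent and every $1\le t\le T$, then summing over $i$ and $t$ (what the paper calls ``relaxing'' constraint (\ref{naive_constraint})) to land inside the feasible set of the LP (\ref{linear program}). If anything, you are more explicit than the paper about why the stopping inequality is valid across a full $T$-window (backward unrolling of the counters), a point the paper simply asserts.
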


\begin{proof}
	Similar to the proof of Theorem 1, we can easily establish that Algorithm 1 terminates within a finite number of iterations and yields a feasible approximate optimal consensus solution of the (\ref{DRCO}) based on Propositions 1-3 and 5. Next, we provide a proof of the accuracy of the approximate optimal solution. \\When Algorithm 1 has reached the stopping criterion (\ref{stopping_criterion_2}), there is
	\begin{equation}\label{naive_constraint}
		0\le e_i\le \mathop{\sum}\limits_{j\in N^{in}_i(t)\cup\left\{i\right\}} e_j\le \epsilon^f, \ \forall i\in\mathcal{V},\ \forall 1\le t\le T,
	\end{equation}	
	where the graph $\mathcal{G}(1:T)$ is strongly connected.\\
	By relaxing the constraint (\ref{naive_constraint}), we can obtain the constraints shown in (\ref{linear program}).\\
	Therefore, the accuracy of the approximate optimal solution obtained by Algorithm 1 is the maximum value of $\sum_{i=1}^m e_i$ subject to the relaxed constraints, as shown in (\ref{linear program}). 
\end{proof}
\begin{figure}[!t]
	\centerline{\includegraphics[width=\columnwidth]{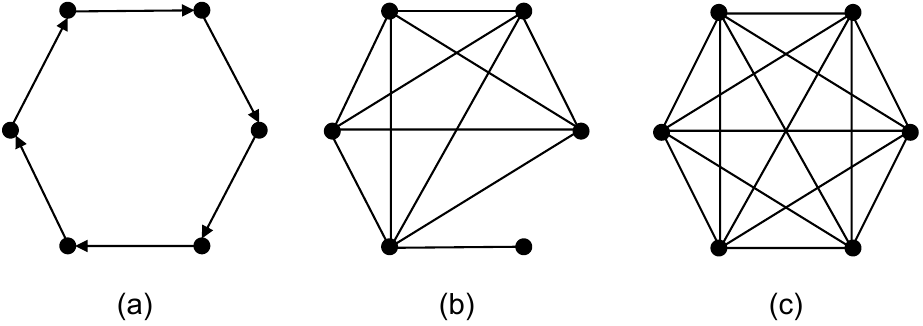}}
	\caption{Three types of graphs. (a) Directed cycle graph. (b) Customized graph. (c) Complete graph.}
	\label{fig2}
	\centerline{\includegraphics[width=\columnwidth]{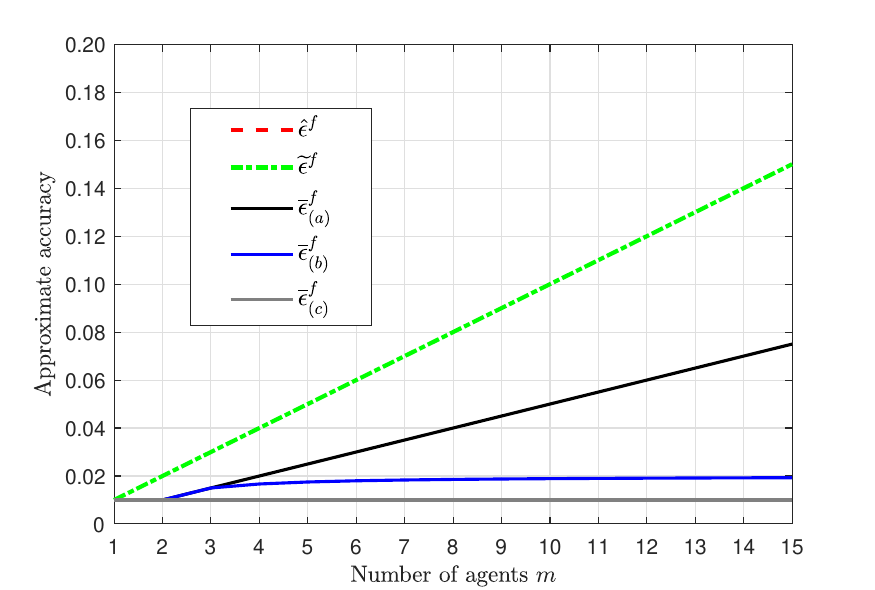}}
	\caption{Accuracy of the approximate optimal solution obtained by Algorithm 1 as a function of the number of agents and the graph structures, for the case where the termination parameter $\epsilon^f=0.01$. The red dashed line indicates the case of centralized systems. The green dotted-dashed line represents the case of the Method \uppercase\expandafter{\romannumeral1}. The solid line refers to the case of the Method \uppercase\expandafter{\romannumeral2}, where the black solid line, blue solid line and grey solid line correspond to the network structures of the directed cycle graphs, customized graphs, and complete graphs, respectively.}
	\label{fig3}
\end{figure}
\par
Based on the results of Theorems 1 and 2, it can be established that Algorithm 1 can finite-time converge to a feasible consensus solution satisfying global optimality to a certain accuracy of the (\ref{DRCO}). Furthermore, the result of Theorem 2 is significantly less conservative than that of Theorem 1 since the optimal objective of the linear program (25) satisfies $\overline \epsilon^f\le m\epsilon^f$, whereas this $\overline\epsilon^f$ is related to network graphs. This can be observed through a numerical example in Fig. 3, where we study how $\hat \epsilon^f$, $\widetilde \epsilon^f$ and $\overline \epsilon^f$ change as a function of the number of agents $m$ in three types of graphs. Algorithm 1 is designed with a termination parameter $\epsilon^f=0.01$. The approximate accuracy $\hat\epsilon^f$ (represented by the red dashed line) in centralized systems is independent of the number of agents $m$ and graphs, and therefore remains constant as $m$ increases. In the case of the Method \uppercase\expandafter{\romannumeral1}, the approximate accuracy $\widetilde\epsilon^f$ (represented by the green dotted-dashed line) linearly increases with $m$ as $\widetilde\epsilon^f=m\epsilon^f$, but independent of graphs. However, there is a different pattern in the case of the Method \uppercase\expandafter{\romannumeral2}, influenced by both the number of agents and the network structures. For directed cyclic graphs, the approximate accuracy $\overline\epsilon^f_{(a)}$ (represented by the black solid line) increases linearly with the number of agents at $m\ge 2$, where the rate of growth is less than that of the Method \uppercase\expandafter{\romannumeral1}. The accuracy of the approximate optimal solution for complete graphs can attain levels comparable to those achieved by the centralized system, i.e., $\overline\epsilon^f_{(c)}=\epsilon^f$ (see the grey solid line). For customized graphs, the approximate accuracy $\overline\epsilon^f_{(b)}$ (represented by the blue solid lines) increases moderately with $m$. Note that here for customized graphs, we consider a specific example of a time-invariant strongly connected network composed of $m$ agents. Among these $m$ agents, $(m-1)$ agents are fully connected to each other, while the remaining one agent is only connected to the $(m-1)$-th agent. Overall, This Method \uppercase\expandafter{\romannumeral2} provides a less conservative result compared to the Method \uppercase\expandafter{\romannumeral1} while still allowing for distributed information, which contrasts the centralized systems. 
\section{Numerical Case Studies}
In order to verify the effectiveness of the proposed Algorithm \ref{alg:1}, we consider a distributed robust convex optimization problem with bounded uncertainty, as shown below.
\begin{flalign}
	\begin{aligned}
		&\mathop{\min}\limits_{x\in \mathcal{F}}  F(x)=\sum^m_{i=1}\Vert x-u_i\Vert^2\\
		&\ {\rm{s.t.}}\ g_i(x,y_i)=(x(1)-v_i)^2+2y_ix(2)-{y_i}^2-1\le 0,\\
		&\qquad \qquad \qquad \qquad \qquad \ \ \forall y_i\in[-1,1],\quad i=1,...,m,
	\end{aligned}
\end{flalign}
where the decision variable $x=[x(1),x(2)]^\top$, the global constraint $\mathcal{F}\!=\!\left\{\!x\in\mathbb{R}^2|\!-2\!\le\! x(1)\!\le\!2, \!-1\!\le\!  x(2)\!\le\!1\right\}\!$. $u_i\in \mathbb{R}^2$ and $v_i\in \mathbb{R}$ are the local objective function vector and local constraint parameter for agent $i$, respectively, where the corresponding values are shown in Table \ref{Table1}. Furthermore, $y_i\in [-1,1]$ is the uncertain parameter for agent $i$.
	\begin{table}[!h]\large
		\caption{Parameter values of the problem}
		\label{Table1}
		\centering 
		\resizebox{\linewidth}{!}{
		\begin{tabular}{ccccccc} 
			\toprule 
			Agent & Agent 1 & Agent 2 & Agent 3 &
			Agent 4 & Agent 5 & Agent 6  \\ 
			\midrule 
			$u_i$ & $[0,6]$ & $[0,0]$ & $[1,1]$& $[-1,-1]$& $[1,-1]$& $[-1,1]$ \\  
			$v_i$ & $-0.75$ & $-0.5$ & $-0.25$ & $0.25$ & $0.5$ & $0.75$ \\		
			\bottomrule 
		\end{tabular} 
	}
	\end{table}
\par
We adopt three types of network graphs, see Fig. 2, to verify that the distributed robust convex optimization algorithm (see: Algorithm \ref{alg:1}) terminates in a finite number of iterations and to illustrate that the solutions of all agents are feasible with respect to their own local constraints in (\ref{DRCO}). The implementation is carried out in MATLAB Version 9.5.0.944444 (R2018b, win64) and runs on an Intel(R) Core (TM) i7-7700HQ CPU @ 2.80GHz, 256GB terminal server. In addition, this section compares Algorithm 1 with the existing related algorithms.
\subsection{Effectiveness of the distributed robust convex optimization algorithm}
\par
We initialize the parameters of Algorithm \ref{alg:1} as follows: the initial restriction parameters $\varepsilon_i^0 = 0.01$, reduction parameter $r=2$, two uncertainty sets $\widetilde Y_i^0 =\emptyset$, $\overline Y_i^0 =\emptyset$, and termination parameter $\epsilon^f = 0.01$. It is worth noting that for the initial values of the restriction parameters $\varepsilon_i^0$, we ensure that there exists at least one point $\hat x=[0,0]^\top$ satisfying $$g_i(\hat x,y_i)+\varepsilon^{0}_i\le0,\ \forall y_i\in Y_i, \quad i=1,...,m.$$
Therefore, the solvability of the (\ref{dubd}) is guaranteed for each iteration of Algorithm \ref{alg:1}.
\par
Then, we implement Algorithm 1 to solve the above numerical case. In solving the two problems ((\ref{dlbd}) and (\ref{dubd})), we use a strategy by combining the distributed random-fixed projection algorithm \cite{xie2018distributed} and the finite-time consensus algorithm \cite{xie2017stop} to obtain the optimal consensus solutions of the (\ref{dlbd}) (or the (\ref{dubd})) within a finite number of time slots. Note that in order to ensure the consensus of the solutions obtained by solving the (\ref{dlbd}) (or the (\ref{dubd})) for all agents, we set a pretty high consensus accuracy $10^{-4}$ on the finite-time consensus algorithm. In addition, since the local constraint functions of the agents are concave with respect to the uncertain parameters $y_i$, the solutions of the LLPs are rigorously solved by the analytical method.
\begin{figure}[!h]
	\centerline{\includegraphics[width=\columnwidth]{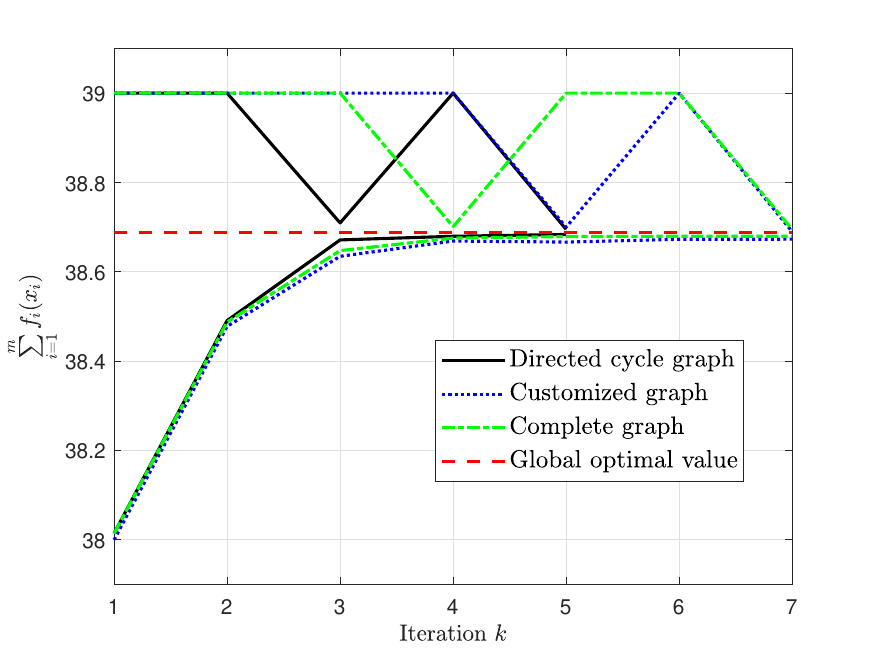}}
	\centering (a)
	\centerline{\includegraphics[width=\columnwidth]{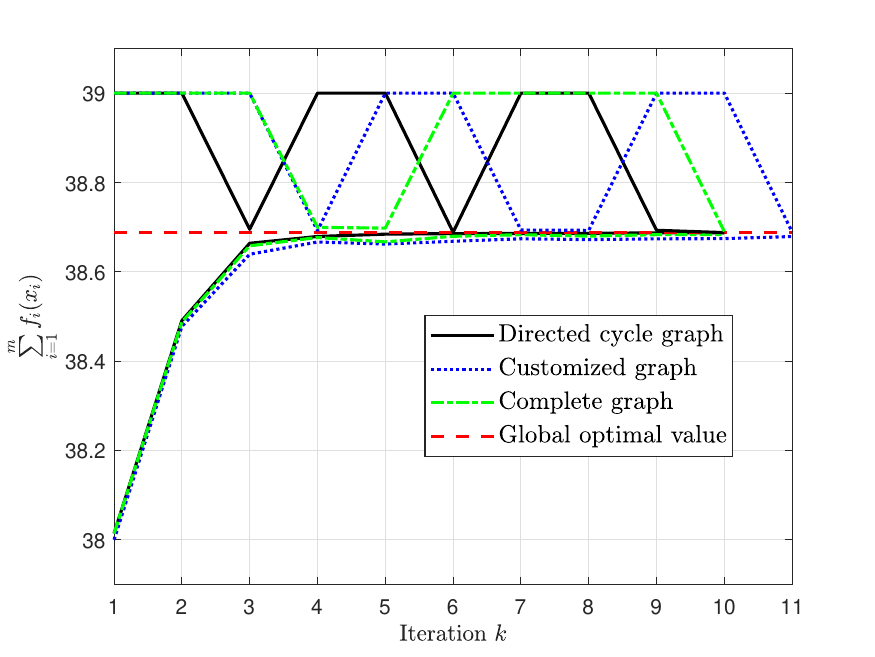}}
	\centering (b)
	\caption{Convergence process of Algorithm 1 on three types of network graphs. (a) Method \uppercase\expandafter{\romannumeral1} as the termination method. (b) Method \uppercase\expandafter{\romannumeral2} as the termination method. The red dashed lines indicate the global optimal value of the numerical case. The black solid lines represent the convergence process of Algorithm 1 under a directed cycle graph. The blue dotted lines indicate the convergence process of Algorithm 1 on the customized graph. The green dotted-dashed lines show the convergence process of Algorithm 1 over a complete graph.}
	\label{fig4}
\end{figure}
	\begin{table*}[!htb]
	\caption{Numerical Results of the Simulation}
	\label{Table2}
	\centering 
	\resizebox{1.0\linewidth}{!}
	{
		\begin{tabular}{|c|c|c|c|c|c|c|c|c|} 
			\hline
			\multicolumn{3}{|c|}{\diagbox{{Content}}{Agent}} & Agent 1 & Agent 2 & Agent 3 &
			Agent 4 & Agent 5 & Agent 6  \\ 
			\hline 
			\multirow{9}*{\makecell{Naive \\Method}} &\multirow{3}*{\makecell{Directed\\ Cycle Graph}} & Optimal point &[-0.0000;0.6597]&[+0.0001;0.6597]&[+0.0000;0.6598]&[+0.0001;0.6597]&[+0.0001;0.6597]&[+0.0001;0.6598] \\
			\cline{3-9}
			~& ~&Local feasibility &$\checkmark$&$\checkmark$&$\checkmark$&$\checkmark$&$\checkmark$ &$\checkmark$\\
			\cline{3-9}
			~& ~&Objective value &\multicolumn{3}{|c|}{$\sum_{i=1}^m f_i(\widetilde x_i^{k+1})=38.6835$} &\multicolumn{3}{|c|}{$\sum_{i=1}^m f_i(\overline x_i^{k+1})=38.6948$}\\
			\cline{2-9}
			~&\multirow{3}*{\makecell{Random\\ Graph}} & Solution &[-0.0000;0.6610]&[-0.0000;0.6610]&[-0.0000;0.6611]&[+0.0001;0.6611]&[+0.0001;0.6610]&[+0.0000;0.6611] \\
			\cline{3-9}
			~& ~&Local feasibility &$\checkmark$&$\checkmark$&$\checkmark$&$\checkmark$&$\checkmark$ &$\checkmark$\\
			\cline{3-9}
			~& ~&Objective value &\multicolumn{3}{|c|}{$\sum_{i=1}^m f_i(\widetilde x_i^{k+1})=38.6729$} &\multicolumn{3}{|c|}{$\sum_{i=1}^m f_i(\overline x_i^{k+1})=38.6896$}\\
			\cline{2-9}
			~&\multirow{3}*{\makecell{Complete\\ Graph}} & Solution &[-0.0001;0.6598]&[-0.0000;0.6598]&[-0.0001;0.6598]&[-0.0001;0.6599]&[+0.0000;0.6599]&[+0.0000;0.6598] \\
			\cline{3-9}
			~& ~&Local feasibility &$\checkmark$&$\checkmark$&$\checkmark$&$\checkmark$&$\checkmark$ &$\checkmark$\\
			\cline{3-9}
			~& ~&Objective value &\multicolumn{3}{|c|}{$\sum_{i=1}^m f_i(\widetilde x_i^{k+1})=38.6794$} &\multicolumn{3}{|c|}{$\sum_{i=1}^m f_i(\overline x_i^{k+1})=38.6946$}\\
			\hline
			\multirow{9}*{\makecell{Tighter \\Method}} &\multirow{3}*{\makecell{Directed\\ Cycle Graph}} & Solution &[-0.0000;0.6611]&[-0.0000;0.6612]&[+0.0000;0.6612]&[-0.0001;0.6612]&[+0.0000;0.6611]&[+0.0000;0.6612] \\
			\cline{3-9}
			~& ~&Local feasibility &$\checkmark$&$\checkmark$&$\checkmark$&$\checkmark$&$\checkmark$ &$\checkmark$\\
			\cline{3-9}
			~& ~&Objective value &\multicolumn{3}{|c|}{$\sum_{i=1}^m f_i(\widetilde x_i^{k+1})=38.6773$} &\multicolumn{3}{|c|}{$\sum_{i=1}^m f_i(\overline x_i^{k+1})=38.6892$}\\
			\cline{2-9}
			~&\multirow{3}*{\makecell{Random\\ Graph}} & Solution &[-0.0000;0.6612]&[-0.0000;0.6612]&[-0.0000;0.6611]&[-0.0000;0.6612]&[-0.0001;0.6611]&[+0.0000;0.6612] \\
			\cline{3-9}
			~& ~&Local feasibility &$\checkmark$&$\checkmark$&$\checkmark$&$\checkmark$&$\checkmark$ &$\checkmark$\\
			\cline{3-9}
			~& ~&Objective value & \multicolumn{3}{|c|}{$\sum_{i=1}^m f_i(\widetilde x_i^{k+1})=38.6788$} & \multicolumn{3}{|c|}{$\sum_{i=1}^m f_i(\overline x_i^{k+1})=38.6887$}\\
			\cline{2-9}
			~&\multirow{3}*{\makecell{Complete\\ Graph}} & Solution &[-0.0000;0.6610]&[+0.0000;0.6610]&[-0.0001;0.6611]&[-0.0000;0.6610]&[-0.0001;0.6611]&[+0.0000;0.6611] \\
			\cline{3-9}
			~& ~&Local feasibility &$\checkmark$ & $\checkmark$ & $\checkmark$&$\checkmark$& $\checkmark$ &$\checkmark$\\
			\cline{3-9}
			~& ~&Objective value & \multicolumn{3}{|c|}{$\sum_{i=1}^m f_i(\widetilde x_i^{k+1})=38.6832$} & \multicolumn{3}{|c|}{$\sum_{i=1}^m f_i(\overline x_i^{k+1})=38.6896$}\\
			\hline
		\end{tabular} 
}	\end{table*}
\par
The graphical results of Algorithm \ref{alg:1} are shown in Fig. \ref{fig4}, where Fig. \ref{fig4} (a) presents the convergence process of Algorithm \ref{alg:1} with Method \uppercase\expandafter{\romannumeral1} as the termination method, while Fig. \ref{fig4} (b) illustrates the convergence process of Algorithm \ref{alg:1} employing Method \uppercase\expandafter{\romannumeral2} as the termination method. Furthermore, we considered three types of network graphs, where the black solid lines indicate the convergence process of Algorithm \ref{alg:1} under a directed cycle graph, the blue dotted lines represent the convergence process of Algorithm \ref{alg:1} over the customized graph, and the green dotted-dashed lines refer to the case of a complete graph. Note that to better present the convergence process of Algorithm \ref{alg:1}, for the case of $\sum_{i=1}^m f_i(\overline x^k_i)=\infty$, we assign a value of $39$ to $\sum_{i=1}^m f_i(\overline x^k_i)$ in this iteration. Based on the Fig. \ref{fig4}, we can draw the following results:
\begin{enumerate}
	\item Algorithm \ref{alg:1} converges to the global optimal value within a finite number of iterations.
	\item The solutions obtained from the distributed upper and lower bounding procedures serve as upper and lower bounds for the global optimal value, respectively.
	\item Compared to Method \uppercase\expandafter{\romannumeral1}, using Method \uppercase\expandafter{\romannumeral2} as the termination method leads to a higher number of iterations for Algorithm 1 to terminate. This further indicates that the algorithm with Method \uppercase\expandafter{\romannumeral2} as the termination method can obtain a higher accuracy solution than Method \uppercase\expandafter{\romannumeral1}.
\end{enumerate}
In addition, the effect of different network graphs on the number of iterations of Algorithm 1 is considered. For Algorithm 1 with Method I as the termination method, the number of iterations is independent of the network graphs. Conversely, when adopting Method II as the termination method in Algorithm 1, the number of iterations is related to the network graphs. The main reason is that, for identical termination parameter $\epsilon^f$ and the number of agents $m$, the approximation accuracy of the solutions obtained by Algorithm 1 with Method II as the termination method is different in terms of different network graphs (see: Fig. \ref{fig3}). The complete graph corresponds to the highest approximation accuracy and requires Algorithm 1 to perform a greater number of iterations. On the contrary, the directed cycle graph has the lowest approximation accuracy, resulting in the fewest iterations of Algorithm 1. However, since the number of agents considered in this numerical case is small, the effect of the network graph on the number of iterations of Algorithm 1 is not significant.
\par
The numerical results of Algorithm \ref{alg:1} are illustrated in Table \ref{Table2}, where all the data provided have been rounded to four decimal places. The global optimal solution of the numerical case is $x^*=[0,\sqrt{7}/4]^\top$, and the corresponding global optimal value is $F^*\approx 38.687746$. According to Table \ref{Table2}, we can conclude that each agent can obtain a locally feasible solution satisfying global optimality to a certain accuracy when Algorithm 1 terminates finitely. 
\subsection{Comparison with Relevant Algorithms}
\begin{figure*}[!t]
	\centerline{\includegraphics[width=1.6\columnwidth]{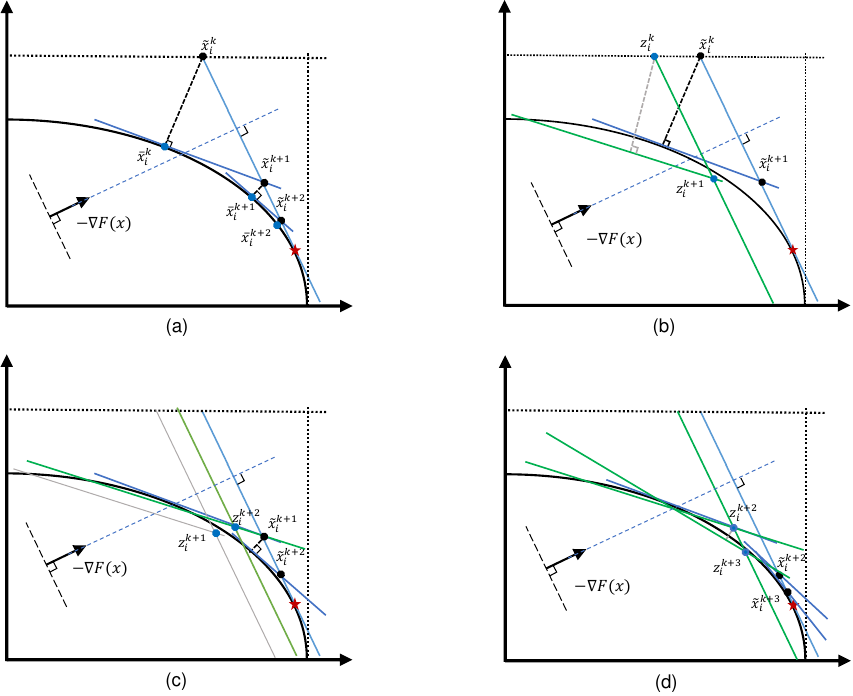}}
	\caption{Graphic illustration of the different strategies of the proposed distributed robust convex optimization algorithm and some relevant algorithms in \cite{burger2013polyhedral,burger2012distributed,yang2014distributed,xunhao}. Assume that the area enclosed by the arc and the coordinate axes is the feasible region of a problem, the direction marked by the black arrow is the gradient descent direction of the objective function, and the red star represents the optimal solution of the problem. The iterative update of the black points in Figure (a) shows the iterative process of the distributed cutting-plane consensus algorithm in \cite{burger2013polyhedral,burger2012distributed}. The iterative update of the black and blue points in Figure (a) indicates the iterative process of the distributed cutting-plane primal-dual algorithm in \cite{yang2014distributed}. The iterative update of the blue points in Figures (b), (c), and (d) presents the iterative update process of the distributed cutting-surface consensus algorithm in \cite{xunhao}, and the iterative update of the black and blue points in Figures (b), (c), and (d) show the iterative update process of Algorithm \ref{alg:1} in this article.}
		\label{fig5}
	\end{figure*}
This subsection compares Algorithm \ref{alg:1} with some related algorithms. To clearly illustrate the differences between Algorithm \ref{alg:1} and the related algorithms, we consider the simplest case: a system composed of only one agent. Our goal is to find the minimum value of the objective function under the set constraints (the area enclosed by the two dotted lines and coordinate axes) and semi-infinite constraints (the area below the arc). Fig. \ref{fig5} graphically illustrates different strategies for solving this (\ref{DRCO}). Note: both Algorithm \ref{alg:1} and the related algorithms construct approximation problems for (\ref{DRCO}) at each iteration. In the case that there are an infinite number of solutions to the approximation problem, we select the one with the smallest value of the horizontal coordinate as the optimal solution for the approximation problem.
\par
The distributed cutting-plane consensus algorithm in \cite{burger2012distributed,burger2013polyhedral} is based on iteratively approximating the (\ref{DRCO}) by successively populating the cutting-planes into the existing finite set of constraints (see: blue solid lines in Fig. \ref{fig5} (a)). The black points in Fig. \ref{fig5} (a) indicate the asymptotic convergence process of the agent. However, the agent cannot obtain a solution with guaranteed feasibility within a finite number of iterations.
\par
The distributed cutting-surface consensus algorithm in \cite{xunhao} is based on iteratively approximating the (\ref{DRCO}) by successively reducing the restriction parameters of the right-hand constraints and populating the cutting-surfaces into the existing finite set of constraints. As shown in Fig. \ref{fig5} (b), (c), and (d), the iterative update of the blue points indicates the iterative process of this algorithm. When the decision variable of the agent lies outside the feasible domain, a cutting-surface is populated into the existing finite set of constraints (see: the green lines in Fig. \ref{fig5} (b) and (d)).
Conversely, when the decision variable lies within the feasible domain, the original constraints in the constraint set (see: the grey lines in Fig. 5 (c)) move toward the feasible domain boundary (see: the green lines in Fig. 5 (c)) due to the reduction of the restriction parameters. This algorithm asymptotically converges to the optimal solution of the (\ref{DRCO}), and the agent can obtain a solution that meets the feasibility in a finite number of iterations.
\par
However, the above algorithms cannot locate the global optimal solution for the certain accuracy in a finite number of iterations. Literature \cite{yang2014distributed} presents a distributed cutting-plane primal-dual algorithm by adding projection operation to the cutting-plane consensus algorithm, which converges to the optimal solution of the (\ref{DRCO}) from the outer and inner directions of the feasible region (see: the black and blue points in Fig. \ref{fig5} (a)). This algorithm, together with the distributed termination method in our article, can get the global optimal solution with a certain accuracy. Nonetheless, this method is confined to the (\ref{DRCO}) with special constraint structures \cite{yang2014distributed}. 
\par
The algorithm in this article combines the advantages of the above algorithms so that each agent can obtain a feasible consensus solution satisfying global optimality to a certain accuracy of the (\ref{DRCO}) within a finite number of iterations. The iterative convergence process of Algorithm \ref{alg:1} is shown in the black and blue points in Figures (b), (c), and (d).
\section{Conclusions and future work}
Based on the right-hand restriction approach proposed in \cite{Mitsos}, a distributed robust convex optimization algorithm is proposed for locating a feasible solution for each agent satisfying global optimality to a certain accuracy of the DRCO within a finite number of iterations. In addition, two distributed termination algorithms, namely the Method \uppercase\expandafter{\romannumeral1} and the Method \uppercase\expandafter{\romannumeral2}, are proposed, which ensure all the agents terminate simultaneously when approximate optimal solutions for a certain accuracy are obtained. The Method \uppercase\expandafter{\romannumeral2} is less conservative than the Method \uppercase\expandafter{\romannumeral1} in terms of the accuracy guarantee of the global optimality, but the accuracy of the solution obtained by the Method \uppercase\expandafter{\romannumeral2} is related to the network structures.
\par
Direct extensions may lie in the following two aspects. In the proposed algorithm, finite-time convergence is proved. We could consider analyzing the convergence rate of the algorithm by thorough theoretical and computational analysis. In addition, this paper mainly considers the DRCO, where the global cost function is strictly convex, and the decision variables are all continuous. Future research would consider extending the proposed algorithm to the case where the local cost function is nonconvex, or some of the decision variables are constrained to integer values.

\bibliographystyle{plain}

\par\noindent
\parbox[t]{\linewidth}{
	\noindent\parpic{\includegraphics[height=1.2in,width=1in,clip,keepaspectratio]{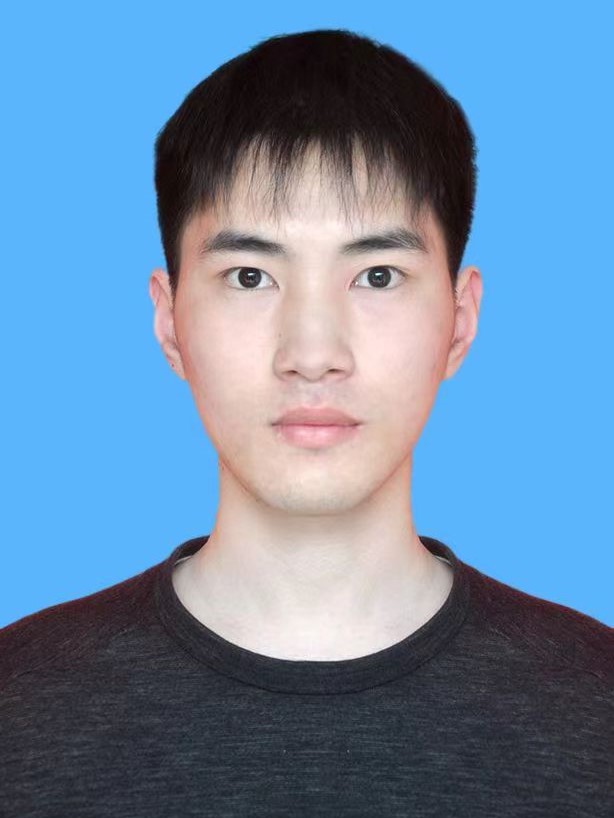}}
	\noindent {\bf Xunhao Wu}\
	received the B.S. degree in automation from Northeastern University, Shenyang, China, in 2021, where he is currently pursuing the Ph.D. degree in control theory and control engineering with the State Key Laboratory of Synthetical Automation for Process Industries, Northeastern University, Shenyang, China.
	His current research interests cover stochastic optimization, robust optimization and their applications in multi-agent systems.}
\vspace{4\baselineskip}                                        
\vspace{-12mm}
\par\noindent
\parbox[t]{\linewidth}{
	\noindent\parpic{\includegraphics[height=1.2in,width=1in,clip,keepaspectratio]{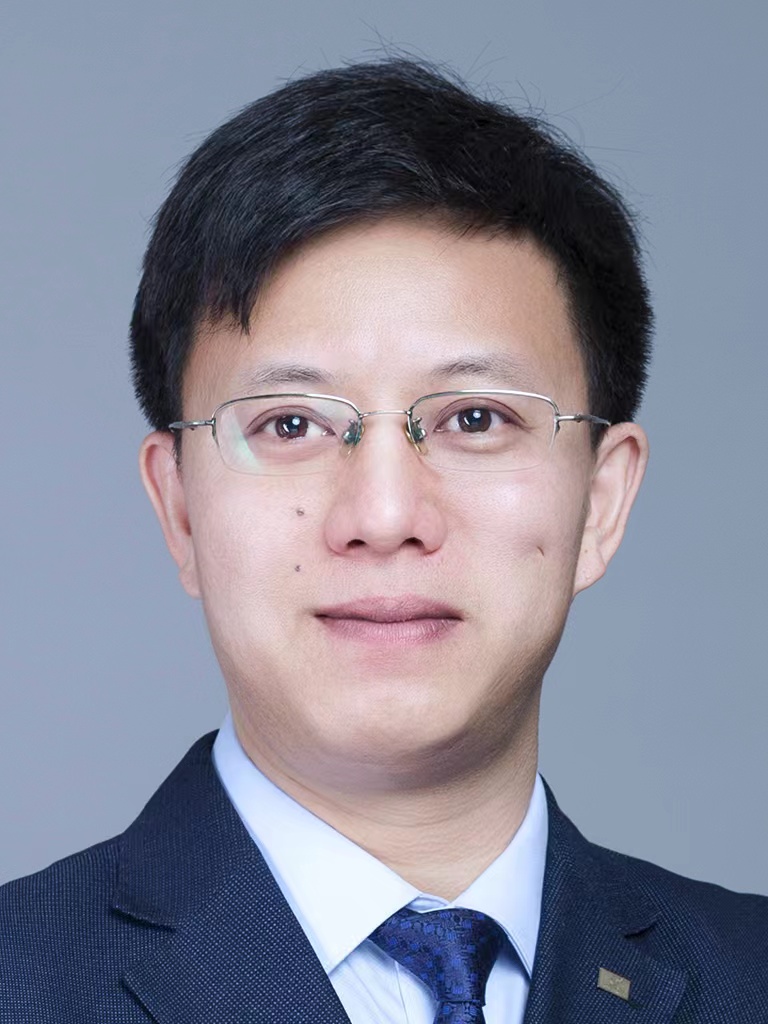}}
	\noindent {\bf Jun Fu}\
	He was a Postdoctoral Researcher with the Department of Mechanical Engineering, Massachusetts Institute of Technology (MIT), Cambridge, MA, USA, from 2010 to 2014. He is a Full Professor with Northeastern University, Shenyang, China. His current research is on dynamic optimization, optimal control, switched systems and their applications.
	Dr. Fu received the 2018 Young Scientist Award in Science issued by the Ministry of Education of China (the first awardee in Chinese Control Community). He is currently an Associate Editor for the Control Engineering Practice, the IEEE Transactions on Industrial Informatics, and the IEEE Transactions on Neural Networks and Learning Systems.}
\vspace{4\baselineskip}  
\end{document}